\documentclass[12pt,a4paper]{article}
\usepackage[utf8]{inputenc}
\usepackage[T1]{fontenc}
\usepackage{amsmath}
\usepackage{amssymb}
\usepackage{amsthm}
\usepackage{graphicx}
\usepackage{mathrsfs}
\usepackage{float}
\usepackage{mathrsfs}
\usepackage{xcolor}
\usepackage{hyperref}
\usepackage{cite}
\usepackage{tikz}
\usepackage{array}
\usepackage[margin=1.3in]{geometry}
\newtheorem{corr}{Corollary}[section]
\newtheorem{example}{Example}[section]
\newtheorem{proposition}{Proposition}[section]

\newtheorem{theorem}{Theorem}[section]
\newtheorem{lemma}{Lemma}[section]
\newtheorem{definition}{Definition}[section]
\title{Lambda number of the reduced power graph of finite groups}
\author{\normalsize Siddharth Malviy$^a$, Keerttiranjan Sahoo$^a$, Vipul Kakkar$^a$ \vspace{0.2cm}\\ \em\small\ $^a$Department of Mathematics, Central University of Rajasthan, Ajmer, India}
\date{}
\begin{document}
	\maketitle
	\noindent \textbf{{Abstract.}} The reduced power graph $\Gamma_{RP}(G)$ of a group $G$ is a simple graph with vertex set $G$, where two distinct vertices $u$ and $v$ are adjacent if and only if $\langle u \rangle \subset \langle v \rangle$ or $\langle v \rangle \subset \langle u \rangle$. An $L(2,1)$-labeling of a graph $\Gamma$ is a mapping from the vertex set of $\Gamma$ to the set
of non-negative integers such that the labels of adjacent vertices differ by at least
$2$ and the labels of vertices at distance two are distinct. The {lambda number}
of $\Gamma$, denoted by $\lambda(\Gamma)$, is defined as the minimum possible span of labels among all $L(2,1)$-labelings of $\Gamma$.
In this paper, we establish upper and lower bounds for $\lambda(\Gamma_{RP}(G))$ and characterize
the conditions under which these bounds are achieved. As an application, we determine the exact value of $\lambda(\Gamma_{RP}(G))$ for certain classes of finite groups.\\
    
	\noindent \textbf{{Keywords.}} Reduced power graph; Lambda number; Finite group\\
	
	\noindent \textbf{2020 MSC.} 05C25, 05C78

    \section{Introduction} 
     The fusion of group theory and graph theory has emerged as an important area of research within combinatorial algebra. By associating graphs to algebraic structures such as groups, one can obtain valuable insights into the structural properties of the group and the relationships among its elements.

\noindent A notable example of such an association is the power graph of a group, introduced by Kelarev in \cite{1}, where he studied the directed power graph of a semigroup. Subsequently, Chakrabarty~\cite{2} investigated the undirected power graph of semigroups and groups. Since then, power graphs of groups have attracted considerable attention, and several researchers have contributed to their structural and combinatorial analysis~\cite{3,4}.

\noindent We first recall that a graph $\Gamma$ is an ordered pair $\Gamma = (V,E)$, where $V(\Gamma)$ and $E(\Gamma)$ denote its set of vertices and set of edges, respectively. The number of vertices $|V (\Gamma)|$ is called the order of the graph $\Gamma$.  Two distinct vertices $u_1, u_2 \in V(\Gamma)$ are said to be {adjacent}, written $u_1 \sim u_2$, if $\{u_1,u_2\} \in E(\Gamma)$; otherwise, they are {non-adjacent}, denoted by $u_1 \nsim u_2$. A graph is called simple if it has no loops and
parallel edges.

\noindent To reduce the complexity arising from the large number of arcs and edges in power graphs (cf. \cite{ma2021lambda} for the
definition and other properties of a power graph of a group), R. Rajkumar and T. Anitha~\cite{rajkumar2017reduced} introduced the concept of the reduced power graph. The reduced power graph $\Gamma_{RP}(G)$ of a group $G$ is a simple graph with vertex set $G$, where two distinct vertices $u$ and $v$ are adjacent if and only if $\langle u \rangle \subset \langle v \rangle$ or $\langle v \rangle \subset \langle u \rangle$, where $\langle u \rangle \subset \langle v \rangle$ denotes that the cyclic group generated by $u$ is a proper subgroup of the cyclic group generated by $v$. Several structural properties of reduced power graphs have been studied in~\cite{rajkumar2018some}.

\noindent For vertices $u, v \in V(\Gamma)$, the {distance} $d(u,v)$ is defined as the length (number of edges) of the shortest path between $u$ and $v$.
\noindent For non–negative integers $j$ and $k$, an $L(j,k)$–labeling of a graph $\Gamma$ is a function 
$f : V(\Gamma) \to \mathbb{Z}$ that satisfies the conditions:  
\[
|f(u)-f(v)| \geq k \quad \text{if } d(u,v)=2, 
\qquad |f(u)-f(v)| \geq j \quad \text{if } u \sim v.
\]
The \emph{span} of $f$ is defined as $(\max f - \min f)$. By convention, we take $\min f = 0$, so the span becomes simply $\max f$.  
The $L(j,k)$–labeling number of $\Gamma$, denoted by $\lambda_{j,k}(\Gamma)$, is the minimum span among all $L(j,k)$–labelings of $\Gamma$.  

The case \(j=2\) and \(k=1\) has been studied most extensively, where \(\lambda_{2,1}(\Gamma)\), commonly denoted by \(\lambda(\Gamma)\), is called the \emph{\(\lambda\)-number} of graph \(\Gamma\). The lambda number of the complete graph of order $n$ is $2n-2$.
This problem is motivated by applications in radio channel assignment \cite{hale1980frequency} and in optical networks \cite{roberts1991t}.  
The concept of $L(2,1)$–labeling was first proposed by Griggs and Yeh \cite{griggs1992labelling}, who showed that determining such a labeling for a general graph is NP–complete.  
Georges and Mauro \cite{georges2003labeling} later extended this framework to the general $L(j,k)$–labeling problem. Several works have contributed to this area (see \cite{georges2003regular, georges1994relating, kim2017lambda, whittlesey1995lambda}), and a detailed survey of established results and open problems is provided in \cite{yeh2006survey}.  

Graphs constructed from groups and related algebraic structures have also been explored in this setting due to their importance and their links with automata theory (cf. \cite{kelarev2003graph, kelarev2009cayley, kelarev2002ring, kelarev2004labelled}).  
Zhou \cite{zhou2005labelling} investigated $L(j,k)$–labelings of Cayley graphs of the abelian groups, while Kelarev et al. \cite{kelarev2015distance} studied connections between semigroup properties and the minimum spans of distance labelings of their Cayley graphs.  
The $\lambda$–number of power graphs of finite groups has been analyzed in \cite{ma2021lambda, sarkar2022lambda, sarkar2023lambda} and that of enhanced power graphs of finite groups in \cite{parveen2025lambda}.

\noindent Throughout this paper, $G$ is a finite group with the identity element $e$.

\noindent Motivated by the work of the lambda number, in this paper we study the lambda number of the reduced power graphs of finite groups.\\
\noindent The paper is organized as follows. In Section~\ref{sec2}, we present the basic definitions, notations, and preliminary results used throughout the paper. In Section~\ref{sec3}, we first study the $\lambda$-number of an arbitrary graph $\Gamma$ and then apply these results to determine the $\lambda$-number of the reduced power graph $\Gamma_{RP}(G)$.
%\noindent First, we identify some groups (Example \ref{ex1}, \ref{p group qn lambda}, \ref{ex3}) for which the $\lambda$-numbers of their reduced power graphs achieve the lower bound stated in Theorem~\ref{lower bound main thm}. 

    \section{Preliminaries} \label{sec2}

    In this part, we briefly recall some standard graph-theoretic terms.

A {subgraph} $\Gamma'$ of $\Gamma$ is a graph such that $V(\Gamma') \subseteq V(\Gamma)$ and $E(\Gamma') \subseteq E(\Gamma)$. If in addition $V(\Gamma') = V(\Gamma)$, then $\Gamma'$ is called a {spanning subgraph} of $\Gamma$.  

The {complement} of $\Gamma$, written $\overline{\Gamma}$, is the graph on the same vertex set as $\Gamma$ where two distinct vertices are adjacent in $\overline{\Gamma}$ exactly when they are not adjacent in $\Gamma$.  

A simple graph is said to be {complete} if every pair of distinct vertices is adjacent; the complete graph with $n$ vertices is denoted by $K_n$.  

%A graph $\Gamma$ is called {$k$-partite} if its vertex set can be partitioned into $k$ disjoint subsets such that no two vertices within the same subset are adjacent. The \textit{complete $k$-partite graph} with part sizes $n_1,n_2,\dots,n_k$ is written as $K_{n_1,n_2,\dots,n_k}$, where each vertex in one part is adjacent to all vertices of the other parts.  

A vertex $v \in V(\Gamma)$ is termed a {universal vertex} if it is adjacent to every other vertex of the graph. The set of all universal vertices of $\Gamma$ is denoted by $\mathrm{Univ}(\Gamma)$.  

A graph $\Gamma$ is said to be {connected} if, for every pair of vertices, there exists a path linking them; otherwise, $\Gamma$ is {disconnected}.

Let $\Gamma_1=(V_1,E_1)$ and $\Gamma_2=(V_2,E_2)$ be two graphs.  
The {union} of $\Gamma_1$ and $\Gamma_2$, denoted by $\Gamma_1 \cup \Gamma_2$, is the graph with  \(V(\Gamma_1 \cup \Gamma_2) = V_1 \cup V_2 \quad \text{and} \quad E(\Gamma_1 \cup \Gamma_2) = E_1 \cup E_2\). The {join} of $\Gamma_1$ and $\Gamma_2$, denoted by $\Gamma_1 + \Gamma_2$, is obtained from $\Gamma_1 \cup \Gamma_2$ by adding all possible edges between vertices of $V_1$ and vertices of $V_2$, that is  
\(E(\Gamma_1 + \Gamma_2) = E_1 \cup E_2 \cup \{ uv : u \in V_1,\, v \in V_2 \}\).

A {path covering} $c(\Gamma)$ of a graph $\Gamma$ is a family of vertex-disjoint paths such that every vertex of $\Gamma$ belongs to one of these paths. The {path covering number} $c(\Gamma)$ is the smallest cardinality among all such coverings.

\noindent In order to determine the $\lambda$–number of our graphs, we recall a previous result that will be useful.

\begin{lemma} \label{thm 1} \cite[Theorem 1.1] {georges1994relating}
    Let $\Gamma$ be a graph with $n$ vertices.  Then 
\begin{enumerate}
    \item  $\lambda(\Gamma) \leq n-1$ if and only if $c(\overline{\Gamma})=1$.  
    \item  for any integer ${r} \geq 2$, $\lambda(\Gamma) = n+ {r}-2$ holds if and only if $c(\overline{\Gamma})={r}$.
\end{enumerate}
\end{lemma}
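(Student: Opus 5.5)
Since Lemma~\ref{thm 1} is the Georges--Mauro--Whittlesey theorem, I would reprove it by putting $L(2,1)$-labelings of $\Gamma$ and path coverings of $\overline{\Gamma}$ in correspondence.

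\emph{Labelings to paths.} Fix an $L(2,1)$-labeling $f$ of $\Gamma$ of span $\lambda$. First I will note that in the regime where the statement has content, namely when $\Gamma$ has diameter at most $2$ (in particular when $\overline{\Gamma}$ has few paths in a covering), any two distinct vertices are at distance $1$ or $2$, so $f$ is injective. For a graph with larger diameter, or a disconnected one, I will either note that the graph-theoretic statement is being used in that restricted setting, or add the universal-vertex reduction used in \cite{georges1994relating}; I would check this point carefully. Next, order the vertices $v_1,\dots,v_n$ so that $f(v_1)<\dots<f(v_n)$, and break this sequence wherever $f(v_{i+1})-f(v_i)\ge 2$. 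Inside each block, consecutive labels differ by exactly $1$, so consecutive vertices are non-adjacent in $\Gamma$, that is, adjacent in $\overline{\Gamma}$. Hence each block is a path in $\overline{\Gamma}$. If there are $r$ blocks, then $f(v_n)-f(v_1)\ge (n-1)+(r-1)$, so $\lambda\ge n+r-2$ with $r\ge c(\overline{\Gamma})$. This gives $\lambda(\Gamma)\ge n+c(\overline{\Gamma})-2$, and also $\lambda(\Gamma)\ge n-1$ in every case.

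\emph{Paths to labelings.} Conversely, take a covering of $\overline{\Gamma}$ by $r=c(\overline{\Gamma})$ paths $P_1,\dots,P_r$. Label the vertices of $P_1$ by $0,1,2,\dots$ along the path, skip one integer, continue consecutively along $P_2$, and so on. Vertices with consecutive labels lie consecutively on a single path, so they are adjacent in $\overline{\Gamma}$ and non-adjacent in $\Gamma$; adjacent vertices of $\Gamma$ therefore get labels differing by at least $2$. All labels are distinct, which takes care of the distance-two condition. The span is $n-1+(r-1)=n+r-2$. Combined with the first step this gives $\lambda(\Gamma)=n+c(\overline{\Gamma})-2$ when $c(\overline{\Gamma})\ge 2$, and $\lambda(\Gamma)=n-1$ when $c(\overline{\Gamma})=1$. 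Both parts of the lemma then follow immediately, since the values $n+r-2$ for different $r$ are distinct.

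The main obstacle is the injectivity claim in the first step, which needs diameter at most $2$. I would first check that every graph to which the paper applies the lemma has this property. For reduced power graphs, the identity is adjacent to every non-identity element whenever $G$ is nontrivial, so $\Gamma_{RP}(G)$ has diameter at most $2$ and the argument goes through. For general graphs I would defer to the cited theorem \cite{georges1994relating}.
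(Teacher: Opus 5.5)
Your two correspondences are correct. The block decomposition works for any \emph{injective} labeling, and the consecutive labeling along a path cover is always injective. Together they give $\lambda'(\Gamma)=n+c(\overline{\Gamma})-2$ for every graph, and hence the lemma whenever $\Gamma$ has diameter at most $2$. That covers the only place the paper uses it (Proposition~\ref{theorem value m=1}, where the identity is a universal vertex); the paper itself gives no proof, only the citation. But the lemma is stated for an arbitrary graph, and the case you defer is where the real difficulty of the Georges--Mauro--Whittlesey theorem lies. As a proof of the statement, this is a genuine gap.

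Your framing of the deferred case is also off. The statement does not have content only in diameter at most $2$, and a small $c(\overline{\Gamma})$ does not force small diameter. The edgeless graph on $n\ge 2$ vertices has $c(\overline{\Gamma})=c(K_n)=1$ and $\lambda(\Gamma)=0$. So your claim ``$\lambda(\Gamma)\ge n-1$ in every case'' is false in general, which is exactly why part (1) is only an inequality. Nor is the deferred case vacuous in part (2). Take $\Gamma=K_m\cup K_1$ with $m\ge 3$: then $\overline{\Gamma}$ is a star with $c(\overline{\Gamma})=m-1\ge 2$, and $\Gamma$ is disconnected. Labeling $K_m$ by $0,2,\dots,2m-2$ and the isolated vertex by $0$ is an optimal labeling with a repeated label.

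Two things are missing. First, that when $c(\overline{\Gamma})\ge 2$, no labeling with repeated labels has span below $n+c(\overline{\Gamma})-2$, so $\lambda(\Gamma)=\lambda'(\Gamma)$ in that case. Second, that a labeling of span at most $n-1$ forces $\overline{\Gamma}$ to have a Hamiltonian path even when it is not injective. Adjoining a universal vertex cannot supply either: it forces every labeling to be injective on $\Gamma$, so it only returns the formula for $\lambda'(\Gamma)$ that you already have.

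You have to handle an arbitrary labeling $f$ directly. Put $V_i=f^{-1}(i)$. Equal labels force distance at least $3$ in $\Gamma$, so in $\overline{\Gamma}$ each $V_i\cup V_{i+1}$ is a clique, and every vertex outside $V_i$ is adjacent to all but at most one vertex of $V_i$. If $f$ uses $d$ distinct labels forming $r$ maximal runs of consecutive integers, its span is at least $d+r-2$. What must then be shown is that the $n-d$ surplus vertices can be used to splice the $r$ run-paths together, giving $c(\overline{\Gamma})\le\max\{1,\,r-(n-d)\}$. That inequality yields both parts of the lemma, but proving it is a real combinatorial step that your proposal does not contain.
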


\noindent %For an element $x \in G$, its centralizer is denoted by $C_G(x)$, while the center of $G$ is written as $Z(G)$. 
The dihedral group $D_n$ of order $2n$ is presented by
\[
D_n=\langle x,y \mid x^n=e,\; y^2=e,\; yxy^{-1}=x^{-1}\rangle .
\]
The generalized quaternion group of order $4n$ is denoted by $Q_{4n}$ and is given by
\[
Q_{4n}=\langle x,y \mid x^{2n}=e,\; y^2=x^n,\; yxy^{-1}=x^{-1}\rangle .
\]
The semi-dihedral group of order $8n$, denoted by $SD_{8n}$, has the presentation
\[
SD_{8n}=\langle x,y \mid x^{4n}=e,\; y^2=e,\; yxy^{-1}=x^{2n-1}\rangle.
\]
A finite group $G$ is called a {$p$-group} if there exists a prime number $p$ such that $|G|=p^n$ for some positive integer $n$.\\

%A group $G$ is called an {AC-group} if for every non-central element $x\in G\setminus Z(G)$, the centralizer $C_G(x)=\{g\in G: gx=xg\}$ is abelian. 

\section{Lambda number of the reduced power graph $\Gamma_{RP}(G)$} \label{sec3}
First, we will study the lambda number of a graph $\Gamma$.

\begin{definition}
    An \emph{$L'(2,1)$-labeling} of a graph $\Gamma$ is an injective function 
$f: V(\Gamma) \to \mathbb{Z}_{\ge 0}$ such that
\[
|f(u)-f(v)| \ge 2 \text{ if } d(u,v)=1, \quad
|f(u)-f(v)| \ge 1 \text{ if } d(u,v)=2.
\]
 The definition of $L(2,1)$ and  $L'(2,1)$-labeling is same, except that the function $f$ is required to be one-to-one.
If $\mathrm{diam}(\Gamma) \le 2$, then $\lambda(\Gamma)= \lambda'(\Gamma)$.
\end{definition}

\noindent Hence, we use $\lambda(\Gamma)$ instead of $\lambda'(\Gamma)$ whenever the graph is of diameter at most two.

\begin{definition}
  Let \(\Gamma\) be a finite graph and \(S \subseteq V(\Gamma)\). Define
\(\operatorname{Gr}[S]\) to be the subgraph of \(\Gamma\) generated by \(S\), which is
obtained from \(\Gamma\) by removing the vertices of \(V(\Gamma)\setminus S\)
and removing the edges in \(E(\Gamma)\) which are adjacent to the vertices of
\(V(\Gamma)\setminus S\).
\end{definition}

 \begin{theorem} \label{lower bound main thm}
      Let $\Gamma$ be a graph of order $n$ with $r$ universal vertices $(1 \leq r < n) $. Then
 \[\lambda(\Gamma) \geq n+r-1.\]
\end{theorem}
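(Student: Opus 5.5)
The plan is to use Lemma~\ref{thm 1} by showing that the complement $\overline{\Gamma}$ needs at least $r+1$ paths in any path covering, i.e.\ $c(\overline{\Gamma}) \geq r+1$.

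\textbf{Step 1 (structure of the complement).} Each universal vertex $u \in \mathrm{Univ}(\Gamma)$ is adjacent in $\Gamma$ to every other vertex. Hence $u$ is an isolated vertex of $\overline{\Gamma}$. The $r$ universal vertices therefore give $r$ isolated vertices of $\overline{\Gamma}$.

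\textbf{Step 2 (counting paths).} In any path covering of $\overline{\Gamma}$, an isolated vertex can only lie on a path consisting of itself alone. So every path covering contains these $r$ trivial paths. Since $r < n$, at least one non-universal vertex remains, and it must lie on a further path. Hence $c(\overline{\Gamma}) \geq r+1 \geq 2$.

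\textbf{Step 3 (apply the lemma).} Put $s = c(\overline{\Gamma})$. Since $s \geq 2$, part (2) of Lemma~\ref{thm 1} gives
\[
\lambda(\Gamma) = n + s - 2 \geq n + (r+1) - 2 = n + r - 1.
\]

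There is no real obstacle here. The only points needing care are:
\begin{itemize}
  \item the hypothesis $r<n$, which guarantees the extra path in Step 2;
  \item the fact that Lemma~\ref{thm 1}(2) is stated as an equivalence for each integer $s\ge 2$, so the actual value $s=c(\overline{\Gamma})$ determines $\lambda(\Gamma)$ exactly.
\end{itemize}
Alternatively, one could argue directly: the $r$ universal vertices and the remaining vertices have pairwise distinct labels (diameter at most $2$), and the universal vertices need gaps of at least $2$ on both sides. However, the path-covering route via Lemma~\ref{thm 1} is cleaner, so I would present that.
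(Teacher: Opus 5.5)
Your proof is correct, but it takes a different route from the paper. The paper writes $\Gamma = K_r + H$, with $H$ the subgraph induced by the non-universal vertices. It then uses Chang--Kuo's join formula $\lambda(K_r+H)=\lambda'(K_r)+\lambda'(H)+2=2r+\lambda'(H)$ together with the injectivity bound $\lambda'(H)\ge |V(H)|-1=n-r-1$. You work entirely inside the path-covering characterization of Lemma~\ref{thm 1}, which is already in the preliminaries: universal vertices are isolated in $\overline{\Gamma}$, so $c(\overline{\Gamma})\ge r+1\ge 2$, and part (2) then gives $\lambda(\Gamma)=n+c(\overline{\Gamma})-2$. Your renaming of the lemma's parameter to $s$ avoids the clash with the theorem's $r$.

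Your route needs neither the external join lemma nor the $\lambda'$ notion, and it gives more. Since $\overline{\Gamma}=\overline{K_r}\cup\overline{H}$ is a disjoint union, $c(\overline{\Gamma})=r+c(\overline{H})$ exactly. Hence $\lambda(\Gamma)=n+r-2+c(\overline{H})$, and the bound $n+r-1$ is attained precisely when $\overline{H}$ has a Hamiltonian path. This yields Proposition~\ref{lower bound reduced power eq to n} for free: the case $r=1$, $H=\Gamma_{RP}(G)-e$, plus the observation that $r\ge 2$ forces an isolated vertex in $\overline{\Gamma_{RP}(G)-e}$.

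The paper's route instead produces the identity $\lambda(\Gamma)=2r+\lambda'(H)$. That form is what it reuses in Theorem~\ref{thm cluster graphs}, where $\lambda'(H)$ is computed via Chang--Kuo's disjoint-union formula.
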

 \begin{proof}
      Let \(\Gamma\) be a graph of order \(n\) with \(r\) universal vertices, where \(1\le r<n\). Let \(
H=\operatorname{Gr}[V(\Gamma)\setminus \operatorname{Univ}(\Gamma)].\)
Then,
\[
\Gamma = K_r + H.
\]

By \cite[Lemma 3.4, p.5]{chang19962},
     \begin{align} \label{eq 1}
     \lambda(\Gamma)=& \lambda(K_r + H ) \nonumber \\
         =& \lambda'(K_r) + \lambda'(H) +2  \nonumber \\
         =& \lambda(K_r) + \lambda'(H) +2  \nonumber \\
         =& 2r-2 + \lambda'(H) + 2 \nonumber \\
         =& 2r + \lambda'(H).
     \end{align}

\noindent By the definition of $\lambda'(H)$, we have
\begin{align*}
\lambda'(H) \geq |V(H)|-1
= n-r-1.
\end{align*}

 By Equation (\ref{eq 1}), $\lambda(\Gamma) \geq ~ n+r-1$.
 \end{proof}

 \begin{definition}
The {clique number} of a graph $G$, denoted by $\omega(G)$, is the maximum cardinality of a clique in $G$. Equivalently, $$\omega(G)=\max\{|C| : C \text{ is a clique in } G\},$$ where a clique is a subset of vertices of $G$ such that every two distinct vertices in the subset are adjacent.
\end{definition}

  \begin{lemma}\label{less than formula lemma}
Let \( \Gamma = K_r + \left(K_{m_1}\cup K_{m_2}\cup \cdots \cup K_{m_p}\right),
\qquad p\ge2, \) be a graph of order \(n\) satisfying \(
\omega(\Gamma)\le \frac{n}{2}.\)
For each non-universal vertex \(x_i\in V(\Gamma)\setminus Univ(\Gamma)\), let \(m_i\) denote the number of non-universal vertices adjacent to \(x_i\). Then \( \deg(x_i)=r+m_i, \)
and \( 2m_i-2<n-r-1.\)
\end{lemma}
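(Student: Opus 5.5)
The plan is to read off the degree directly from the structure of $\Gamma$, and then bound $m_i$ using the clique containing $x_i$ together with the hypothesis $\omega(\Gamma)\le \frac n2$.

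\textbf{Identifying the universal vertices.} Write $\Gamma=K_r+H$ with $H=K_{m_1}\cup\cdots\cup K_{m_p}$. Every vertex of $K_r$ is adjacent to all other vertices, so it lies in $\operatorname{Univ}(\Gamma)$. Since $p\ge 2$, a vertex of $H$ lying in one component $K_{m_j}$ is not adjacent to the vertices of any other component. Hence no vertex of $H$ is universal, and $\operatorname{Univ}(\Gamma)=V(K_r)$, which has exactly $r$ elements.

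\textbf{Computing the degree.} Let $x_i$ be a non-universal vertex, lying in some component $K_{m_j}$ of $H$. Its neighbours are the $r$ universal vertices together with the non-universal vertices adjacent to it. By definition there are $m_i$ of the latter; these are precisely the other vertices of its component. Thus $\deg(x_i)=r+m_i$.

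\textbf{Bounding $m_i$ via the clique number.} The vertex $x_i$, its $m_i$ non-universal neighbours (which are pairwise adjacent, forming the rest of $K_{m_j}$), and the $r$ universal vertices together form a clique. This clique has $r+m_i+1$ vertices, so
\[
r+m_i+1\le \omega(\Gamma)\le \frac n2 .
\]
Therefore $2m_i\le n-2r-2$, and so
\[
2m_i-2\le n-2r-4 .
\]
Finally, $n-2r-4<n-r-1$ is equivalent to $-r-3<0$, which always holds. This gives $2m_i-2<n-r-1$.

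The only step needing care is the identification of $\operatorname{Univ}(\Gamma)$ with $V(K_r)$. This is where $p\ge2$ is used: if $p=1$, the vertices of $H$ would also be universal, and $m_i$ would not be the quantity intended.
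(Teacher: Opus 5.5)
Your proof is correct and follows the same route as the paper: bound the clique formed by the $r$ universal vertices together with the component containing $x_i$ using $\omega(\Gamma)\le \frac n2$, then compare with $n-r-1$. Your count of that clique as $r+m_i+1$ vertices is the accurate one under the lemma's definition of $m_i$; the paper uses the weaker $\deg(x_i)=r+m_i\le\omega(\Gamma)$, which still suffices. Your explicit check that $p\ge 2$ forces $\operatorname{Univ}(\Gamma)=V(K_r)$ also fills in a step the paper leaves implicit.
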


 \begin{proof}
Since every non-universal vertex belongs to a clique of size \(r+m_i\), we have
\( \deg(x_i)=r+m_i \leq \omega(\Gamma)\le \frac{n}{2}.
\)
Hence,
\( m_i\le \frac{n}{2}-r. \)
Therefore,
\begin{align*}
2m_i-2
    &\le 2\left(\frac{n}{2}-r\right)-2 \\
    &= n-2r-2 \\
    &< n-r-1,
\end{align*}
since \(r\ge 0\). This completes the proof.
\end{proof}

%\begin{example}    For $n \geq 2$, $\lambda(\Gamma(Q_{4n}))= 4n+1$. \end{example}\begin{proof}    To be written later.\end{proof}

    %\subsection{$\lambda(\Gamma)$ of the AC-groups}

     \begin{theorem} \label{thm cluster graphs}
     Let \( \Gamma = K_r + \left(K_{m_1}\cup K_{m_2}\cup \cdots \cup K_{m_p}\right) \), $p \geq 2$ be a graph of order $n$ with clique number \(\omega(\Gamma)\le \frac{n}{2}\). Then the lambda number of the graph $\Gamma$ is $n+r-1$.
 \end{theorem}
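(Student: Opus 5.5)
The plan is to prove matching lower and upper bounds. I will assume $r\ge 1$; if $r=0$ the graph is a disjoint union of cliques, the hypothesis of Theorem~\ref{lower bound main thm} fails, and that case would need separate treatment, so I set it aside. Since $p\ge 2$, there are non-universal vertices, so $1\le r<n$, and Theorem~\ref{lower bound main thm} gives $\lambda(\Gamma)\ge n+r-1$ directly. The work is therefore in the upper bound. Put $N=n-r$ and $H=K_{m_1}\cup\cdots\cup K_{m_p}$, so that $\Gamma=K_r+H$ as in the proof of Theorem~\ref{lower bound main thm}.

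For the upper bound I would use Lemma~\ref{thm 1} and compute $c(\overline{\Gamma})$. In $\overline{\Gamma}$ each of the $r$ universal vertices is isolated. The remaining vertices induce $\overline{H}$, which is the complete multipartite graph $K_{m_1,\dots,m_p}$. Hence $c(\overline{\Gamma})=r+c(\overline{H})$. If I can show $c(\overline{H})=1$, then $c(\overline{\Gamma})=r+1\ge 2$, and part~2 of Lemma~\ref{thm 1} gives $\lambda(\Gamma)=n+(r+1)-2=n+r-1$. This would in fact settle equality by itself, with the lower bound serving as a check.

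So the key step is to show that $K_{m_1,\dots,m_p}$ with $p\ge 2$ has a Hamiltonian path. The input is Lemma~\ref{less than formula lemma}: $2m_i-2<n-r-1=N-1$, so $2m_i\le N$, i.e. each part has at most half of the $N$ vertices. I will build the path greedily. At each step I append a vertex from a part of largest remaining size, choosing a part different from the one containing the previously appended vertex (when there is a tie, a different largest part is always available).

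I expect the main obstacle to be checking that this greedy procedure never gets stuck. I will prove by induction the invariant that, if $M$ vertices remain, then every part has at most $\lceil M/2\rceil$ remaining vertices. Moreover, if some part has exactly $\lceil M/2\rceil$ remaining vertices with $M$ odd, that part is not the one just used. Under this invariant there is always an allowable part, and removing one vertex from a largest allowable part preserves the invariant. The base case $M=N$ is exactly $m_i\le N/2$. Consecutive vertices of the resulting path lie in different parts and are therefore adjacent in $\overline{H}$, so it is a Hamiltonian path. This gives $c(\overline{H})=1$, which completes the argument.
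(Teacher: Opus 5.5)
Your argument is correct for $r\ge 1$, but it takes a different route from the paper.

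\textbf{The paper's route.} The paper never passes to the complement. It applies the Chang--Kuo join formula to get $\lambda(\Gamma)=2r+\lambda'(H)$. It then peels off one clique at a time with their formula for $\lambda'$ of a disjoint union, using $2m_i-2<n-r-1$ at each stage to conclude $\lambda'(H)=n-r-1$.

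\textbf{Your route.} You use Lemma~\ref{thm 1}, already in the preliminaries, to reduce everything to $c(\overline{\Gamma})=r+c(K_{m_1,\dots,m_p})$. You then construct a Hamiltonian path in the complete multipartite graph directly. Your invariant is the right one and is indeed preserved: at most $\lceil M/2\rceil$ vertices remain in each part, and when $M$ is odd the last-used part is strictly below that.

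\textbf{Comparison.} The two proofs are closer than they look. An injective labeling of $H$ with span $n-r-1$ is exactly a Hamiltonian path in $\overline{H}$ read in label order, so the paper's iteration is implicitly certifying the path you build. Your version buys three things:
\begin{enumerate}
\item Self-containment: only Lemma~\ref{thm 1} is needed, not the external join and union formulas.
\item Both bounds in a single step, since part 2 of that lemma is an equivalence.
\item Transparency about the hypothesis. The condition $\omega(\Gamma)\le n/2$ enters only through the balance of the parts, so your proof gives $\lambda(\Gamma)=n+r-1$ whenever $2\max_i m_i\le n-r+1$.
\end{enumerate}

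\textbf{The greedy rule.} It should read ``a largest part among those other than the last-used one,'' which is what your invariant argument actually uses. As literally phrased it is not always applicable. For example, starting from part sizes $3,1,1,1,1$ (allowed here with $r=1$, $n=8$), after the first step the last-used part is still the unique largest.

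\textbf{The case $r=0$.} This case is not merely one to set aside: the statement is false there. For instance, $\lambda(K_1\cup K_1)=0\neq n-1$. More generally, a disjoint union of cliques has $\lambda=2\max_i m_i-2$, which is at most $n-2$ under the clique hypothesis. So $r\ge 1$ is a genuine implicit hypothesis, and the paper's use of the join formula tacitly requires it as well.
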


 \begin{proof} Note that $$\Gamma = K_r + (K_{m_1} \cup K_{m_2} \cup \hdots \cup K_{m_p}),$$
   where $\displaystyle \sum_{i=1}^p m_i= n -r$ and $m_1 \leq m_2 \leq \hdots  \leq m_p$.
   Then $\Gamma= K_r + H_1$,
   where $H_1=K_{m_1} \cup K_{m_2} \cup \hdots \cup K_{m_p}$.\\

   \noindent By \cite[Lemma 3.4, p.5]{chang19962}, we have
   \begin{align} \label{eq 2}
       \lambda(\Gamma) =& \lambda(K_r + H_1)  \nonumber\\ 
       =& \lambda'(K_r) + \lambda'(H_1) +2  \nonumber\\
       =& \lambda(K_r) + \lambda'(H_1) +2  \nonumber\\
       =& 2r-2 + \lambda'(H_1) + 2  \nonumber \\
       =& 2r + \lambda'(H_1).
   \end{align}

  %\noindent Now, $\lambda'(H_1)= \lambda'(K_{m_1} \cup K_{m_2} \cup \hdots \cup K_{m_p})$.\\

  \noindent By \cite[Lemma 3.2, p.4]{chang19962},
  $$\lambda'(H_1)= max \{ \lambda'(K_{m_1}), \lambda'(H_2), \sum_{i=1}^p m_i-1\},$$

 \noindent  where $H_2= K_{m_2} \cup K_{m_3} \cup \hdots \cup K_{m_p}$.\\

  \noindent Therefore,
 \begin{align} \label{eq 3}
 H_1=& K_{m_1} \cup H_2 \mbox{~and} \nonumber \\
     \lambda'(H_1)=& max \{ 2m_1-2, \lambda'(H_2),  n-r-1\}.
 \end{align} 

 \noindent Then by Lemma \ref{less than formula lemma}, $2m_1-2< n-r-1$. This implies
 \begin{equation} \label{eq 5}
     \lambda'(H_1)= max \{ n-r-1, \lambda'(H_2)\}
 \end{equation}

\noindent Now, \begin{align*}
    \lambda'(H_2)=& \lambda'(K_{m_2} \cup K_{m_3} \cup  \hdots K_{m_p}) \\
    =& \lambda'(K_{m_2} \cup H_3),
\end{align*}

\noindent where $H_3= K_{m_3} \cup K_{m_4} \cup \hdots \cup K_{m_p}$.\\

\noindent By \cite[Lemma 3.2, p.4]{chang19962}, \begin{align} \label{eq 6}
    \lambda'(H_2)=& max  \{ \lambda'(K_{m_2}), \lambda'(H_3), \sum_{i=2}^{p} {m_i}-1\} \nonumber \\
    =& max  \{ 2m_2-2, \lambda'(H_3), n-r-m_1-1\} 
\end{align}

\noindent  By Lemma \ref{less than formula lemma}, $2m_2-2< n-r-1$. Also, $n-r-m_1-1 < n-r-1$. Therefore, by Equation (\ref{eq 5}) and (\ref{eq 6}), we get 
$\lambda'(H_1)= max  \{ n-r-1, \lambda'(H_3)\}.$

\noindent Inductively, we get
\begin{equation}
    \lambda'(H_1)= max  \{ n-r-1, \lambda'(H_{p-1})\},
\end{equation} 
\noindent where $H_{p-1}= K_{m_{p-1}} \cup K_{m_{p}}.$\\

\noindent Now,  $\lambda'(H_{p-1})= \lambda'(K_{m_{p-1}} \cup K_{m_{p}})$. By \cite[Lemma 3.2, p.4]{chang19962}, we have 
\begin{align}
    \lambda'(H_{p-1})=& \lambda'(K_{m_{p-1}} \cup K_{m_{p}}) \nonumber \\
    =& max \{2m_{p-1}-2, 2m_p-2, m_{p-1} + m_{p}-1\} \nonumber \\
    =& max \{2m_p-2, m_{p-1} + m_{p}-1\}
\end{align}

\noindent By Lemma \ref{less than formula lemma}, $2m_p-2 < n-r-1$. Also, $ n-r-1 > m_p + m_{p-1}-1$.\\

\noindent Hence, $\lambda'(H_1)= n-r-1$. Thus,
$$\lambda(\Gamma)= 2r+n-r-1 = n+r-1.$$
\end{proof}

\begin{corr} \label{star}
     Let $\Gamma$ be a star graph of order $n \geq 4$. Then $\lambda(\Gamma)=n$.
\end{corr}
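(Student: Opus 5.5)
The plan is to treat the star as a special case of Theorem~\ref{thm cluster graphs}. A star graph of order $n$ is $K_{1,n-1}$: one center adjacent to $n-1$ leaves, and the leaves are pairwise non-adjacent. For $n \geq 3$ the center is the only universal vertex. So we can write
\[
\Gamma = K_1 + \left(K_1 \cup K_1 \cup \cdots \cup K_1\right),
\]
with $p = n-1$ copies of $K_1$. This has exactly the form required in Theorem~\ref{thm cluster graphs}, with $r=1$ and $m_1=\cdots=m_p=1$. The hypothesis $p \geq 2$ holds because $n \geq 4$ gives $p = n-1 \geq 3$.

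Next I check the clique number hypothesis. A star contains no triangles, so $\omega(\Gamma)=2$. The condition $\omega(\Gamma)\le n/2$ then becomes $2 \le n/2$, which is exactly $n\ge 4$; this is why the corollary assumes $n \geq 4$. Theorem~\ref{thm cluster graphs} now gives
\[
\lambda(\Gamma)=n+r-1=n+1-1=n.
\]

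There is no real obstacle, but two points need attention. First, $r=1$ satisfies $1\le r<n$, so the lower bound of Theorem~\ref{lower bound main thm} also applies and gives $\lambda(\Gamma)\ge n$, which is consistent with the value obtained. Second, Theorem~\ref{thm cluster graphs} relies on Lemma~\ref{less than formula lemma}, so I should confirm its inequality directly here: with $m_i=1$ we get $2m_i-2=0<n-2=n-r-1$, which holds.

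As an independent sanity check, Lemma~\ref{thm 1} gives the same answer. The complement of the star is $K_{n-1}$ together with an isolated vertex, so its path covering number is $2$. Taking $r=2$ in part 2 of Lemma~\ref{thm 1} then yields $\lambda(\Gamma)=n+2-2=n$.
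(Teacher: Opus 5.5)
Your proof is correct and follows the paper's argument: write the star as $K_1 + (K_1\cup\cdots\cup K_1)$ with $r=1$, note that $\omega(\Gamma)=2\le n/2$ exactly when $n\ge 4$, and apply Theorem~\ref{thm cluster graphs} to get $n+r-1=n$. Your extra check via Lemma~\ref{thm 1} is also valid: the complement is $K_{n-1}\cup K_1$, so $c(\overline{\Gamma})=2$. That argument does not need the clique-number hypothesis, and it shows the value $n$ holds for every $n\ge 2$.
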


\begin{proof}
    Note that every star graph can be written as $K_1 + ( n-1) K_1$ and clique number $\omega(\Gamma)=2 \leq \frac{n}{2}$. Then by Theorem \ref{thm cluster graphs}, $\lambda(\Gamma)=n$.
\end{proof}

% \begin{theorem}    Let $\Gamma$ be a graph of order $n$ which is a  disjoint union of complete subgraphs with universal vertices $r$  ($1 \leq r <n$). Then the lambda number of the graph $\Gamma$ is $n+r-1$. \end{theorem}

%\begin{proposition}  Let $\Gamma$ be a star graph of order $n$. Then $\lambda(\Gamma)=n$.\end{proposition}\begin{proof}    Define $f: V(\Gamma) \longrightarrow \{0,1,2, \hdots n-1\}$ by  $$f(x_0)=0, f(x_i)=i+1 ; \forall i=1,2, \hdots, n-1,$$ \noindent where $V(\Gamma)= \{x_0,x_1, x_2, \hdots, x_{n-1} \}$, with $x_0$ as the central vertex.  \noindent Since $\Gamma$ is a star graph, $f$ is a $L(2,1)$ labeling with a span equal to $n$. By Theorem \ref{lower bound main thm}, $\lambda(\Gamma)=n$.\end{proof}

Now, we will study the lambda number of the reduced power graph of finite groups. %We define the {reduced power graph} of a group $G$, denoted by $\Gamma_{RP}(G)$, as the simple undirected graph with vertex set $G$, where two distinct vertices $x,y \in G$ are adjacent if and only if $\langle x \rangle \subset \langle y \rangle$ or $\langle y \rangle \subset \langle x \rangle$ and $\langle x \rangle \neq \langle y \rangle$.

\begin{proposition} \cite[Theorem 2.7, p.73]{rajkumar2017reduced}
    The reduced power graph $\Gamma_{RP}(G)$ of a group $G$ is a complete graph if and only if $|G| \leq 2$.
\end{proposition}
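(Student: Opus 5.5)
We prove both directions directly from the definition of adjacency in $\Gamma_{RP}(G)$. The starting observation is that distinct elements generating the same cyclic subgroup are never adjacent. Indeed, adjacency requires $\langle u\rangle \subset \langle v\rangle$ or $\langle v\rangle\subset\langle u\rangle$ with proper containment. So a complete reduced power graph forces every cyclic subgroup to have at most one generator. We then control the element orders.

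\emph{Sufficiency.} If $|G|=1$, the graph is $K_1$. If $|G|=2$, then $G=\{e,a\}$ with $\langle e\rangle=\{e\}\subset\langle a\rangle=G$, so $e\sim a$ and the graph is $K_2$. Both are complete.

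\emph{Necessity.} Suppose $\Gamma_{RP}(G)$ is complete and assume, for contradiction, that $|G|\ge 3$.

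\begin{itemize}
\item \textbf{Every element has order at most $2$.} If some $a\in G$ has order $k\ge 3$, then $a\neq a^{-1}$ and $\langle a\rangle=\langle a^{-1}\rangle$. Neither cyclic subgroup properly contains the other, so $a\nsim a^{-1}$, contradicting completeness.
\item \textbf{There are two distinct involutions.} Since $|G|\ge 3$, the group has at least two non-identity elements $a\neq b$, each of order $2$ by the previous step.
\item \textbf{These involutions are not adjacent.} We have $\langle a\rangle=\{e,a\}$ and $\langle b\rangle=\{e,b\}$. These are distinct subgroups of the same order $2$, so neither properly contains the other. Hence $a\nsim b$, again a contradiction.
\end{itemize}

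Therefore $|G|\le 2$. The argument is elementary; the only point needing care is to justify that $a\ne a^{-1}$ exactly when the order of $a$ is at least $3$, and that equal generated subgroups give non-adjacency.
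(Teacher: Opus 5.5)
Your proof is correct and complete. The paper gives no argument of its own here: it cites the statement from Rajkumar and Anitha \cite{rajkumar2017reduced} (Theorem 2.7) and uses it only as background, so there is no proof in the paper to compare against step by step.

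Your direct argument is the natural elementary one. Completeness forces every non-identity element to be the unique generator of its cyclic subgroup, because $a\neq a^{-1}$ together with $\langle a\rangle=\langle a^{-1}\rangle$ gives $a\nsim a^{-1}$. Hence $G$ has exponent at most $2$. Then any two distinct involutions generate incomparable subgroups of order $2$, so $G$ has at most one involution and $|G|\le 2$. Compared with the paper's citation, your version makes the statement self-contained at essentially no cost.

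One minor point: the statement is phrased for an arbitrary group. Your first step also covers infinite groups if you read ``order $k\ge 3$'' as ``order not dividing $2$'', which includes infinite order. This works because $a=a^{-1}$ holds exactly when $a^2=e$. Under the paper's standing assumption that $G$ is finite, this does not matter.
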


%\subsection{Lower bound of the lambda number of the reduced power graph $\Gamma_{RP}(G)$}

In the reduced power graph $\Gamma_{RP}(G)$, $e$ is adjacent to all the remaining vertices, hence by Theorem \ref{lower bound main thm}, we can also find the lower bound of the reduced power graph as given below:

    \begin{proposition} \label{thm lower bound reduced}
      Let $G$ be a group. Then  \(\lambda(\Gamma_{RP}(G)) \geq |G|.\)
\end{proposition}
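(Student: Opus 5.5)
The plan is to apply Theorem \ref{lower bound main thm} to $\Gamma=\Gamma_{RP}(G)$, which has order $n=|G|$. That requires two facts: $\Gamma_{RP}(G)$ has at least one universal vertex, and it is not complete (so the number $r$ of universal vertices satisfies $1\le r<n$). Small groups, where one of these may fail, will be handled directly.

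\emph{Step 1: the identity is universal.} For every $g\neq e$ we have $\langle e\rangle=\{e\}\subset\langle g\rangle$ properly. Hence $e\sim g$, so $e\in \mathrm{Univ}(\Gamma_{RP}(G))$ and $r\ge 1$.

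\emph{Step 2: the generic case $|G|\ge 3$.} By the proposition quoted from \cite{rajkumar2017reduced}, $\Gamma_{RP}(G)$ is not complete. A graph in which every vertex is universal is complete, so $r<n$. Theorem \ref{lower bound main thm} then gives
\[
\lambda(\Gamma_{RP}(G))\ge n+r-1\ge n+1-1=|G|.
\]

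\emph{Step 3: the degenerate cases $|G|\le 2$.} These are excluded from Theorem \ref{lower bound main thm} because $r=n$.
\begin{itemize}
\item If $|G|=2$, then $\Gamma_{RP}(G)=K_2$. Its lambda number is $2\cdot 2-2=2=|G|$, so the bound holds with equality.
\item If $|G|=1$, then $\Gamma_{RP}(G)=K_1$ and $\lambda=0<1$, so the inequality fails.
\end{itemize}
The statement should therefore be read with $G$ nontrivial; I would add that hypothesis explicitly, or remark that it is implicit.

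The only point needing care is this boundary bookkeeping, namely that $r<n$ holds. Everything else is an immediate specialization of Theorem \ref{lower bound main thm}.
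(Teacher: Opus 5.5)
Your proof is correct and is the paper's argument: the identity $e$ is a universal vertex of $\Gamma_{RP}(G)$, so Theorem \ref{lower bound main thm} gives $\lambda(\Gamma_{RP}(G))\ge |G|+r-1\ge |G|$. Your boundary bookkeeping is also right and more careful than the paper, which applies the theorem without checking $r<n$: for $|G|\ge 3$, non-completeness gives $r<n$; for $|G|=2$, $\Gamma_{RP}(G)=K_2$ gives equality; and the trivial group is a genuine exception ($\lambda(K_1)=0<1$), so the statement does need $G$ nontrivial.
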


%\begin{proof}    Since $e$ is adjacent with all other vertices, by Theorem \ref{lower bound main thm}, we can get the wanted result.\end{proof}

\begin{proposition} \label{lower bound reduced power eq to n}
    Let $G$ be a finite non-trivial group. Then $\lambda(\Gamma_{RP}(G))= |G|$ if and only if $\overline{(\Gamma_{RP}(G)-e)}$ contains a Hamiltonian path.
\end{proposition}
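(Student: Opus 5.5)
Set $n=|G|$ and $H=\operatorname{Gr}[G\setminus\{e\}]=\Gamma_{RP}(G)-e$. The plan is to reduce the question to $\lambda'(H)$ and then characterise when $\lambda'(H)$ attains its trivial lower bound.

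First I would record the universal vertices. The identity $e$ is adjacent to every other vertex, because $\langle e\rangle=\{e\}\subset\langle g\rangle$ for every $g\neq e$. So $\Gamma_{RP}(G)=K_1+H$.

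Applying \cite[Lemma 3.4]{chang19962} exactly as in the proof of Theorem~\ref{lower bound main thm} then gives
\[
\lambda(\Gamma_{RP}(G)) = \lambda'(K_1)+\lambda'(H)+2 = \lambda'(H)+2 .
\]
(Here $\lambda'(K_1)=0$.) Hence $\lambda(\Gamma_{RP}(G))=|G|$ holds if and only if $\lambda'(H)=n-2=|V(H)|-1$.

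The case $n=2$ needs a separate check. Then $H$ is a single vertex, $\lambda'(H)=0=n-2$, and the complement of a single vertex trivially has a Hamiltonian path. So we may assume $|V(H)|\ge 2$.

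It remains to prove that $\lambda'(H)=|V(H)|-1$ if and only if $\overline{H}$ has a Hamiltonian path. Write $m=|V(H)|$. Since $\lambda'$ labelings are injective, any $L'(2,1)$-labeling of $H$ uses $m$ distinct nonnegative integers, so $\lambda'(H)\ge m-1$.

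\emph{Hamiltonian path implies equality.} Suppose $\overline{H}$ has a Hamiltonian path $v_0v_1\cdots v_{m-1}$. Set $f(v_i)=i$. This is injective. If $|f(u)-f(v)|=1$, then $u,v$ are consecutive on the path, so they are adjacent in $\overline{H}$ and hence non-adjacent in $H$. Therefore adjacent vertices of $H$ get labels differing by at least $2$. Vertices at distance two only need distinct labels, which injectivity already provides. So $f$ is an $L'(2,1)$-labeling of span $m-1$, and $\lambda'(H)=m-1$.

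\emph{Equality implies a Hamiltonian path.} Suppose $f$ is an $L'(2,1)$-labeling of span $m-1$. Since $f$ is injective with $m$ values in $\{0,\dots,m-1\}$, it is a bijection onto $\{0,1,\dots,m-1\}$. For each $i$, the vertices labelled $i$ and $i+1$ cannot be adjacent in $H$, since adjacent vertices need labels differing by at least $2$. So they are adjacent in $\overline{H}$, and listing the vertices in the order of their labels gives a Hamiltonian path of $\overline{H}$.

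Combining this equivalence with $\lambda(\Gamma_{RP}(G))=\lambda'(H)+2$ proves the proposition.

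The one point I expect to need care is the use of \cite[Lemma 3.4]{chang19962}. As I recall it, that lemma is stated for $\lambda$ of a join and may carry a hypothesis, for example on the join graph having diameter at most $2$. The join $K_1+H$ does have diameter at most $2$ through $e$, so the hypothesis should be met. If the lemma turns out not to apply cleanly, I would prove the identity $\lambda(K_1+H)=\lambda'(H)+2$ directly. Since the diameter is at most $2$, every labeling of $K_1+H$ is injective. The label of $e$ must differ by at least $2$ from every other label. In a minimum-span labeling one can then move $e$ to the label $0$ and shift the labels of $H$ into $\{2,\dots\}$ without increasing the span. This shows the optimum equals $\lambda'(H)+2$.
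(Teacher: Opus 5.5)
Your proof is correct, but it takes a different route from the paper. The paper gives no argument of its own and defers to \cite[Theorem 3.1]{ma2021lambda}. That argument goes through the path-covering criterion of Lemma~\ref{thm 1}, which is why Lemma~\ref{thm 1} appears in the preliminaries. Since $e$ is universal in $\Gamma_{RP}(G)$, it is an isolated vertex of $\overline{\Gamma_{RP}(G)}$, so $c(\overline{\Gamma_{RP}(G)})=1+c(\overline{\Gamma_{RP}(G)-e})\ge 2$. Lemma~\ref{thm 1}(2) then gives $\lambda(\Gamma_{RP}(G))=|G|+c(\overline{\Gamma_{RP}(G)-e})-1$. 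Hence $\lambda(\Gamma_{RP}(G))=|G|$ if and only if $c(\overline{\Gamma_{RP}(G)-e})=1$, that is, if and only if this complement has a Hamiltonian path.

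You instead split off $e$ with the join identity $\lambda(K_1+H)=\lambda'(H)+2$. You then prove by hand that $\lambda'(H)=|V(H)|-1$ exactly when $\overline{H}$ has a Hamiltonian path: label the vertices $0,\dots,m-1$ in path order, and conversely read the path off a bijective labeling. In effect you re-derive the relevant special case of the Georges--Mauro--Whittlesey theorem.

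What each route buys:
\begin{itemize}
\item Your argument is self-contained and elementary.
\item The paper's route is one line given Lemma~\ref{thm 1}. It also yields the exact value $\lambda(\Gamma_{RP}(G))=|G|+c(\overline{\Gamma_{RP}(G)-e})-1$ for every non-trivial $G$, which recovers Proposition~\ref{thm lower bound reduced} as well.
\end{itemize}

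Your direct fallback for the join identity is sound, but the phrase ``shift the labels of $H$ into $\{2,\dots\}$'' should be made precise. Suppose $f(e)=t$ and $H$ has labels on both sides of $t$. Set $e\mapsto 0$, raise only the labels below $t$ by $2$, and leave the labels above $t$ unchanged. The raised labels are at most $t$, and the unchanged ones are at least $t+2$, so every constraint still holds and the span does not increase. If all labels of $H$ lie below $t$, first reflect $f\mapsto \max f-f$. If none lie below $t$, simply translate by $-t$.
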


\begin{proof}
    Proof is similar to the proof of \cite[Theorem 3.1, p.747]{ma2021lambda}.
\end{proof}

\begin{proposition} \label{pgroup cyclic lambda}
    Let $G$ be a cyclic group of order $p^n$, where $p$ is a prime. Then
    $$\lambda(\Gamma_{RP}(G))=p^n+n-1.$$
\end{proposition}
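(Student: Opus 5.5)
The plan is to identify $\Gamma_{RP}(G)$ exactly as a complete multipartite graph, read off the path covering number of its complement, and then apply Lemma~\ref{thm 1}(2).

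\textbf{Step 1: describe the graph.} Let $G=\langle g\rangle$ with $|G|=p^n$ and $n\ge 1$. The subgroups of $G$ form a chain
\[
\{e\}=H_0\subset H_1\subset\cdots\subset H_n=G,\qquad |H_i|=p^i .
\]
For $x\in G$, the subgroup $\langle x\rangle$ equals $H_i$ exactly when $o(x)=p^i$. Take two distinct elements $x,y$.
\begin{itemize}
\item If $o(x)\neq o(y)$, then $\langle x\rangle$ and $\langle y\rangle$ are distinct members of the chain. Since the subgroups are totally ordered, one is properly contained in the other, so $x\sim y$.
\item If $o(x)=o(y)$, then $\langle x\rangle=\langle y\rangle$, so neither is properly contained in the other and $x\nsim y$.
\end{itemize}
Let $V_i=\{x\in G: o(x)=p^i\}$, so $|V_i|=\varphi(p^i)\ge 1$. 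Then $\Gamma_{RP}(G)$ is the complete $(n+1)$-partite graph with parts $V_0,\dots,V_n$. In particular its diameter is at most $2$, so $\lambda$ and $\lambda'$ agree, consistent with the framework of Lemma~\ref{thm 1}.

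\textbf{Step 2: the complement and its path cover.} The complement $\overline{\Gamma_{RP}(G)}$ is the disjoint union $K_{|V_0|}\cup K_{|V_1|}\cup\cdots\cup K_{|V_n|}$ of $n+1$ nonempty cliques.
\begin{itemize}
\item Each clique has a Hamiltonian path, so $c\le n+1$.
\item A path lies inside a single connected component, and there are $n+1$ components, so $c\ge n+1$.
\end{itemize}
Hence $c\bigl(\overline{\Gamma_{RP}(G)}\bigr)=n+1$.

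\textbf{Step 3: apply Lemma~\ref{thm 1}(2).} Since $n\ge1$, we have $r=n+1\ge 2$. With $|V(\Gamma_{RP}(G))|=p^n$, the lemma gives
\[
\lambda(\Gamma_{RP}(G))=p^n+(n+1)-2=p^n+n-1 .
\]

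\textbf{Main obstacle.} The only real content is Step~1. It rests on the fact that the subgroups of a cyclic $p$-group are totally ordered, which gives adjacency exactly between elements of different orders. One should also check the degenerate case $p=2$, $n=1$: there $\Gamma_{RP}(G)=K_2$, whose $\lambda$-number is $2=2+1-1$, so it agrees with the formula and is covered by the same argument.
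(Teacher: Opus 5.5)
Your proof is correct, but it argues differently from the paper.

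The paper proceeds by induction on $n$. It writes $\Gamma_{RP}(\mathbb{Z}_{p^{k+1}})=\Gamma_{RP}(\mathbb{Z}_{p^k})+\overline{K_{\phi(p^{k+1})}}$: the generators of $\mathbb{Z}_{p^{k+1}}$ are pairwise non-adjacent, and each is adjacent to every element of the unique subgroup of order $p^k$. It then applies the Chang--Kuo join formula $\lambda(\Gamma_1+\Gamma_2)=\lambda'(\Gamma_1)+\lambda'(\Gamma_2)+2$, so each step adds $\phi(p^{k+1})+1$ to the span. The base case is the star graph $\Gamma_{RP}(\mathbb{Z}_p)$.

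You unroll that iterated join in one stroke. You recognize $\Gamma_{RP}(G)$ as the complete $(n+1)$-partite graph on the order classes and read the value off Lemma~\ref{thm 1} via $c(\overline{\Gamma_{RP}(G)})=n+1$. This is the same path-covering technique the paper itself uses later for $\mathbb{Z}_{pq^m}$.

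What your version buys:
\begin{itemize}
\item It is non-inductive and needs only a lemma already stated in the paper.
\item It makes visible why $\lambda(\Gamma_{RP}(G))-|G|$ equals the number of subgroups of $G$ minus two.
\item It treats small primes uniformly. The paper's base case invokes Corollary~\ref{star}, which requires order at least $4$, so it does not literally cover $\mathbb{Z}_2$ and $\mathbb{Z}_3$ (though the values $2$ and $3$ are correct). Your argument needs no special handling there.
\end{itemize}

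What the paper's route offers in exchange is an explicit recursion between consecutive levels $\mathbb{Z}_{p^k}\subset\mathbb{Z}_{p^{k+1}}$. Its join formula also applies to joins of arbitrary graphs, not only to graphs whose complement is a disjoint union of cliques.
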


\begin{proof}
We will prove the statement by the mathematical induction on $n$.  For $n=1$, $G \cong \mathbb{Z}_p$. Then $\Gamma_{RP}(G)$ is a star graph. By Corollary \ref{star}, $\lambda(\Gamma_{RP}(G))=p$.
    \noindent Let for $n=k$, $\lambda(\Gamma_{RP}(\mathbb{Z}_k))=p^k+k-1$. \\
  Now, for $n=k+1$, note that $\Gamma_{RP}(\mathbb{Z}_{p^{k+1}})=  \Gamma_{RP}(\mathbb{Z}_{p^k}) + \overline{K_{\phi(p^{k+1})}},$ where $\phi$ is the Euler phi function. %Since $\overline{K_{\phi(p^{k+1})}}$ is null graph, by \cite[Lemma 3.4, p.5]{chang19962} we use $\lambda'(\overline{K_{\phi(p^{k+1})}})$ instead of $\lambda(\overline{K_{\phi(p^{k+1})}})$ for function to be one-one. \\
  By \cite[Lemma 2.2, p.2]{chang19962}  and \cite[Lemma 3.4, p.5]{chang19962}, we have
    \begin{align*}
       \lambda(\Gamma_{RP}(\mathbb{Z}_{p^{k+1}})) =\lambda'(\Gamma_{RP}(\mathbb{Z}_{p^{k+1}}))=&  \lambda'(\Gamma_{RP}(\mathbb{Z}_{p^k})) + \lambda'(\overline{K_{\phi(p^{k+1})}})+2 \\ =&  \lambda(\Gamma_{RP}(\mathbb{Z}_{p^k})) + \lambda'(\overline{K_{\phi(p^{k+1})}})+2 \\ 
        =& p^k +k-1 +2 + \lambda'(\overline{K_{\phi(p^{k+1})}}) \\
        =& p^k + k+1 + \phi(p^{k+1})-1 \\
        =& p^k + k + p^{k+1} - p^k \\
        =& p^{k+1} + (k+1) -1.
    \end{align*}

  \noindent  Hence, $\lambda(\Gamma_{RP}(G))=p^n+n-1$ for all $n \in \mathbb{N}$.
\end{proof}

\noindent In the following section, we identify some groups for which the $\lambda$-numbers of their reduced power graphs achieve the lower bound stated in Proposition \ref{thm lower bound reduced}.

\begin{example} \label{ex1}
     %Then, \[\lambda(\Gamma_{RP}(D_{2n}))=2n.\]

   Consider the dihedral group  $D_{n}, n \geq 3$  with $|Univ(\Gamma_{RP}(D_{n}))|=1$. 
Write \( B = \{y, xy, x^2y, \ldots, x^{n-1}y\} \). Then 
\(
D_{n} = \langle x \rangle \cup B.
\)

\noindent Define an integer-valued function \( f \) on \( D_{n} \) as follows:
\[
f(e) = 0, \quad f(x) = 3, \quad f(x^2) = 5, \, \ldots, \, f(x^{n-1}) = 2(n-1) + 1,
\]
\[
f(y) = 2, \quad f(xy) = 4, \quad f(x^2y) = 6, \, \ldots, \, f(x^{n-1}y) = 2(n-1) + 2.
\]
Note that the edge set
\[
E(D_{n}) = E(\langle x \rangle) \cup \{\{e, z\} : z \in B\},
\]
which implies that \( f \) is an \(L(2,1)\)-labeling of \( D_{n} \), and so \( \lambda(\Gamma_{RP}(D_{n})) \le 2n \).
By Proposition \ref{thm lower bound reduced}, we obtain \( \lambda(\Gamma_{RP}(D_{n}))=2n. \)

\end{example}

For the generalized quaternion group $Q_{4n}$ with $n \ge 2$, the $|Univ(\Gamma_{RP}(Q_{4n}))|$ of its reduced power graph $\Gamma_{RP}(Q_{4n})$ equals $2$ if $n$ is a power of $2$, and equals $1$ otherwise.

\begin{example} \label{p group qn lambda}
   For $n \ge 2$, we have
\[
\lambda(\Gamma(Q_{4n})) =
\begin{cases}
4n + 1, & \text{if $n$ is a power of $2$},\\[4pt]
4n, & \text{otherwise},
\end{cases}
\]
where $Q_{4n}$ denotes the generalized quaternion group of order $4n$.
\end{example}

\begin{proof}

First, suppose that $n$ is a power of $2$. Write
\(
B=\{\,y,\; xy,\; x^2y,\; \ldots,\; x^{2n-1}y\,\}.
\)
Then
\(
Q_{4n}=\langle x\rangle \cup B.
\)

\noindent Define an integer-valued function \(f\) on \(Q_{4n}\) by
\[
\begin{aligned}
&f(e)=0,\qquad f(x^n)=2,\\
&f(x^k y)=2k+4,\quad\text{for }k=0,1,2,\dots,2n-2, \\
&f(x^k)=2k+3,\quad\text{for }k=1,2,\dots, n-1,\\
&f(x^k)=2k+1,\quad\text{for }k=n+1,n+2,\dots,2n-1  \\
\mbox{~and~}  &f(x^{2n-1}y)= 4n+1.
\end{aligned}
\]

%In particular \(f(y)=3,\; f(x)=4\), and the largest label used is\[f(x^{2n-1}y)=2(2n-1)+3=4n+1.\]

\noindent From the structure of $\Gamma_{RP}(Q_{4n})$ given in \cite{rajkumar2018some}, one can observe that 
%every element of \(B\) has order \(4\) or \(2\) in the quaternion group structure (and \(x^n\) is the central involution). The labelling above assigns even labels to the powers of \(x\) (including the central element \(x^n\)) and odd labels to the coset elements \(x^k y\);
adjacent vertices receive labels differing by at least \(2\), while vertices at distance two receive labels differing by at least \(1\). Hence \(f\) is an \(L(2,1)\)-labeling of \(Q_{4n}\) with span $4n+1$.
Therefore,
\[
\lambda(\Gamma_{RP}(Q_{4n})) \le 4n+1.
\]
Combining this with the lower bound from Theorem \ref{lower bound main thm}, we obtain
\[
\lambda(\Gamma_{RP}(Q_{4n})) = 4n+1.
\]
In the following, suppose that $n$ is not a power of $2$. In view of Theorem \ref{lower bound main thm}, it
suffices to prove that $\lambda(\Gamma_{RP}(Q_{4n}) \leq 4n$.
Note that there exists an element $x_0$ of odd order in $Q_{4n}$.  
Write
\[
\langle x \rangle \setminus \{e, x^n, x_0\} = \{z_1, z_2, \ldots, z_{2n-3}\}.
\]
Define an integer-valued function $f$ on the vertex set $V(Q_{4n})$ as follows:
\[
f(e)=0,\quad f(x^n)=2,\quad f(x_0)=3,
\]
\[
f(z_i)=2(i+1)+1 \quad \text{for } i \in \{1,2,\ldots,2n-3\},
\]
\[
f(x^j y)=2(j+2) \quad \text{and} \quad f(x^{2n-j}y)=2n+2(j+1)
\quad \text{for } j \in \{1,2,\ldots,n-1\},
\]
\[
f(y)=4 \quad \text{and} \quad f(x^n y)=4n-1.
\]
\noindent From the structure of $\Gamma_{RP}(Q_{4n})$ given in \cite{rajkumar2018some}, one can observe that 
adjacent vertices receive labels differing by at least \(2\), while vertices at distance two receive labels differing by at least \(1\). Hence \(f\) is an \(L(2,1)\)-labeling of \(Q_{4n}\) with span $4n$.
Therefore, $\lambda(\Gamma_{RP}(Q_{4n})) \leq 4n$.
\end{proof}

\begin{example} \label{ex3}
    Consider the  semi-dihedral group  \[ SD_{8n} = \langle x, y ~|~ x^{4n} = e, y^2 = x^{2n}, y^{-1}xy = x^{2n - 1} \rangle,\] with $|Univ(\Gamma_{RP}(SD_{8n}))|=1$.
\noindent Define an integer-valued function \(f\) on \(SD_{8n}\) by
\[
\begin{aligned}
&f(e)=0,\qquad f(x^{2n})=2,\\
&f(x^k y)=2k+3,\quad\text{for }k=0,2,4,\dots,4n-4, \\
&f(x^k)=2k+2,\quad\text{for }k=1,2,\dots, 2n-1,\\
&f(x^k y)=4n+k,\quad\text{for }k=1,3,5,\dots,4n-1, \\
&f(x^k)=2k,\quad\text{for }k=2n+1,2n+2,\dots,4n-1  \\
\mbox{~and~}  &f(x^{4n-2}y)= 8n.
\end{aligned}
\]

%In particular \(f(y)=3,\; f(x)=4\), and the largest label used is\[f(x^{2n-1}y)=2(2n-1)+3=4n+1.\]

\noindent From the structure of $\Gamma_{RP}(SD_{8n})$ given in \cite{rajkumar2018some}, one can observe that 
%every element of \(B\) has order \(4\) or \(2\) in the quaternion group structure (and \(x^n\) is the central involution). The labelling above assigns even labels to the powers of \(x\) (including the central element \(x^n\)) and odd labels to the coset elements \(x^k y\);
adjacent vertices receive labels differing by at least \(2\), while vertices at distance two receive labels differing by at least \(1\). Hence \(f\) is an \(L(2,1)\)-labeling of \(SD_{8n}\) with span $8n$.
Therefore,
\[
\lambda(\Gamma_{RP}(SD_{8n})) \le 8n.
\]
Combining this with the lower bound from Proposition \ref{thm lower bound reduced}, we obtain
\[
\lambda(\Gamma_{RP}(SD_{8n})) = 8n.
\]
\end{example}

\begin{theorem}
     \cite{dummit_foote_1991} Let $G$ be a finite non-cyclic nilpotent group. Then $G$ has one of the following forms:
    % the following statements are equivalent:\begin{enumerate} \item $G$ is a nilpotent group. \item Every Sylow subgroup of $G$ is normal. \item $G$ is the direct product of its Sylow subgroups. \item For $x,y \in G$, $x$ and $y$ commute whenever $o(x)$ and $o(y)$ are relatively primes.\end{enumerate}

\begin{enumerate}
    \item $G \cong G' \times \mathbb{Z}_n$, where $G'$ is a non-trivial nilpotent group of odd order having no cyclic Sylow subgroup and $\gcd(n,|G'|)=1$.

    \item $G \cong G' \times P \times \mathbb{Z}_n$, where $G'$ is a nilpotent group of odd order having no cyclic Sylow subgroup, $P$ is a $2$-group which is neither cyclic nor of maximal class and $\gcd(n,|G'|)=\gcd(2,n)=1$.

    \item $G \cong G' \times Q_{2^{k+1}} \times \mathbb{Z}_n$, where $G'$ is described as in (2), $Q_{2^{k+1}}$ is a generalized quaternion group of order $2^{k+1}$ and $\gcd(n,|G'|)=\gcd(2,n)=1$.

    \item $G \cong G' \times D_{2^{k+1}} \times \mathbb{Z}_n$, where $G'$ is described as in (2), $D_{2^{k+1}}$ is a dihedral group of order $2^{k+1}$ and $\gcd(n,|G'|)=\gcd(2,n)=1$.

    \item $G \cong G' \times SD_{2^{k+1}} \times \mathbb{Z}_n$, where $G'$ is described as in (2), $SD_{2^{k+1}}$ is a semidihedral group of order $2^{k+1}$ and $\gcd(n,|G'|)=\gcd(2,n)=1$.
\end{enumerate}
\end{theorem}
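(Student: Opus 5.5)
The plan rests on two standard facts. A finite nilpotent group is the direct product of its Sylow subgroups. A $p$-group is cyclic or generalized quaternion exactly when it has a unique subgroup of order $p$; for $p=2$ the $2$-groups of maximal class are precisely the dihedral, semidihedral and generalized quaternion groups. (The paper attributes the statement to \cite{dummit_foote_1991}, so this is essentially a bookkeeping argument.)

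First, write $G = P_1 \times P_2 \times \cdots \times P_s$ with $P_i$ the Sylow $p_i$-subgroups. Group these factors into three blocks:
\begin{itemize}
\item $G'$, the product of the non-cyclic Sylow subgroups for odd primes;
\item $P$, the Sylow $2$-subgroup when it is non-cyclic (absent otherwise);
\item $\mathbb{Z}_n$, the product of all cyclic Sylow subgroups.
\end{itemize}
Since the Sylow orders are pairwise coprime, a product of cyclic groups of coprime orders is cyclic. This gives $\mathbb{Z}_n$ with $\gcd(n,|G'|)=1$, and $\gcd(2,n)=1$ whenever the $2$-part is non-cyclic. Also $G'$ has odd order and no cyclic Sylow subgroup by construction. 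Because $G$ is non-cyclic, at least one of $G'$ or $P$ is non-trivial.

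Next, split into cases according to $P$.
\begin{itemize}
\item If the Sylow $2$-subgroup is cyclic (including trivial), it is absorbed into $\mathbb{Z}_n$, and $G'$ must be non-trivial. This is form (1).
\item If $P$ is non-cyclic and not of maximal class, we get form (2).
\item If $P$ is non-cyclic of maximal class, the classification of $2$-groups of maximal class (order at least $8$: dihedral, semidihedral, generalized quaternion) gives forms (4), (5) and (3) respectively. Here $|P|=2^{k+1}$, and $G'$ is allowed to be trivial, as in (2).
\end{itemize}

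The main obstacle is the citation of the classification of $2$-groups of maximal class (e.g.\ via Taussky's theorem: $|P:P'|=4$ forces $D$, $SD$ or $Q$). I would simply invoke it rather than reprove it. One also has to be careful with the small case: the Klein four-group is $D_{2^2}$ in this paper's notation, or equivalently non-cyclic of order $4$, so it fits (4) or (2) depending on convention. I would note this explicitly.
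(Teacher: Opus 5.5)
Your argument is correct. The paper gives no proof of this statement; it simply quotes Dummit--Foote. Your Sylow-decomposition bookkeeping is exactly the reasoning that the citation stands in for, so the comparison is between an argument and a citation, not between two different proofs.

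Your write-up makes visible something the citation hides. Dummit--Foote only supplies the first ingredient: a finite nilpotent group is the direct product of its Sylow subgroups. The rest is a separate result from $p$-group theory, namely splitting the non-cyclic $2$-part according to whether it has maximal class, and identifying the maximal-class case (order at least $8$) with the dihedral, semidihedral and generalized quaternion groups. You rightly isolate that result and invoke it rather than reprove it.

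Your handling of the order-$4$ case is also apt. Under the paper's own convention ($D_n$ of order $2n$) the Klein four-group is $D_2$, while the statement's indexing ($D_{2^{k+1}}$ of order $2^{k+1}$) makes it the case $k=1$. Either way it lands in form (2) or form (4).

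The theorem only asserts that $G$ has \emph{one of} the listed forms, not that the forms are disjoint or all realized. So overlaps of this kind, or degenerate parameter values such as a ``semidihedral group of order $8$'', do not affect your proof.
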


\begin{proposition}
 \label{theorem enhanced power reduced}
    For any finite group G, $$\lambda(\Gamma _{RP}(G)) \leq \lambda(\Gamma _{P}(G)) \leq \lambda(\Gamma _{EP}(G)),$$ where $\Gamma _{P}(G)$ is the power graph of $G$ and $\Gamma _{EP}(G)$ is the Enhanced power graph of $G$.
\end{proposition}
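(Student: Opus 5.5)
The plan is to show that the three graphs on the common vertex set $G$ are nested as spanning subgraphs,
\[
E(\Gamma_{RP}(G)) \subseteq E(\Gamma_{P}(G)) \subseteq E(\Gamma_{EP}(G)),
\]
and then to use monotonicity of the $\lambda$-number under taking spanning subgraphs. Recall that in $\Gamma_P(G)$ two distinct vertices $u,v$ are adjacent iff $u\in\langle v\rangle$ or $v\in\langle u\rangle$, i.e. iff $\langle u\rangle\subseteq\langle v\rangle$ or $\langle v\rangle\subseteq\langle u\rangle$. In $\Gamma_{EP}(G)$ they are adjacent iff $\langle u,v\rangle$ is cyclic.

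For the first inclusion, a proper containment $\langle u\rangle\subset\langle v\rangle$ is in particular a containment, so every edge of $\Gamma_{RP}(G)$ is an edge of $\Gamma_P(G)$. For the second, if $\langle u\rangle\subseteq\langle v\rangle$ then $\langle u,v\rangle=\langle v\rangle$ is cyclic, so every edge of $\Gamma_P(G)$ is an edge of $\Gamma_{EP}(G)$.

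The remaining step, and the only point needing care, is the monotonicity lemma: if $\Gamma'$ is a spanning subgraph of $\Gamma$, then $\lambda(\Gamma')\le\lambda(\Gamma)$. Take an optimal $L(2,1)$-labeling $f$ of $\Gamma$ and check that it is an $L(2,1)$-labeling of $\Gamma'$.
\begin{itemize}
\item If $u\sim v$ in $\Gamma'$, then $u\sim v$ in $\Gamma$, so $|f(u)-f(v)|\ge 2$.
\item If $d_{\Gamma'}(u,v)=2$, then $u\not\sim v$ in $\Gamma'$, but $u$ and $v$ may be adjacent in $\Gamma$. Distances can only shrink when edges are added, so $d_\Gamma(u,v)\le 2$. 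Since $u\neq v$, either $d_\Gamma(u,v)=1$, giving $|f(u)-f(v)|\ge 2$, or $d_\Gamma(u,v)=2$, giving $|f(u)-f(v)|\ge 1$. In both cases $|f(u)-f(v)|\ge 1$, as required.
\end{itemize}
Hence $f$ is a valid labeling of $\Gamma'$ with the same span, and $\lambda(\Gamma')\le\lambda(\Gamma)$.

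Applying this lemma twice to the chain of spanning subgraphs gives
\[
\lambda(\Gamma_{RP}(G))\le\lambda(\Gamma_P(G))\le\lambda(\Gamma_{EP}(G)).
\]
As a cross-check, all three graphs contain $e$ as a universal vertex, so they have diameter at most two. Therefore each $\lambda$ coincides with $\lambda'$, and the argument is consistent with the paper's convention of working with injective labelings.
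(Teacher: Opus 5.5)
Your proof is correct and follows the same route as the paper, which notes the spanning-subgraph inclusions $\Gamma_{RP}(G)\subseteq\Gamma_P(G)\subseteq\Gamma_{EP}(G)$ and then invokes the monotonicity of $\lambda$ under subgraphs (\cite[Lemma 2.1]{chang19962}). The only difference is that you prove this monotonicity lemma directly instead of citing it.
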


\begin{proof}
The result follows by noting the inclusion of the above graphs and \cite[Lemma 2.1, p.2]{chang19962}.
\end{proof}

\begin{proposition} \label{p group rest lambda}
    Let $G$ be a $p$-group. Then, $\lambda(\Gamma_{RP}(G))=|G|$ if and only if $G$ is neither cyclic nor a generalized quaternion $2$-group. 
\end{proposition}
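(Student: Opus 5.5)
Both directions go through the Hamiltonian-path criterion of Proposition \ref{lower bound reduced power eq to n}: $\lambda(\Gamma_{RP}(G))=|G|$ exactly when $\overline{\Gamma_{RP}(G)-e}$ has a Hamiltonian path. Equivalently, by Theorem \ref{lower bound main thm}, it holds exactly when $\Gamma_{RP}(G)$ has only one universal vertex and the non-universal part admits an injective labeling using all of $\{2,\dots,|G|\}$.

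\emph{Necessity.} If $G$ is cyclic of order $p^n$, Proposition \ref{pgroup cyclic lambda} gives $\lambda=p^n+n-1$, which exceeds $|G|$ for $n\ge 2$. The case $n=1$ needs a remark: there $\lambda=p=|G|$ by Corollary \ref{star}, so the statement implicitly requires $G$ to be cyclic of order at least $p^2$, or treats $\mathbb{Z}_p$ separately, and I will flag this. If $G\cong Q_{2^{k+1}}$, the unique involution $z$ lies in every nontrivial cyclic subgroup. Hence $z$ is universal, $\Gamma_{RP}(G)$ has two universal vertices, and Theorem \ref{lower bound main thm} gives $\lambda\ge |G|+1$. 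This agrees with Example \ref{p group qn lambda}.

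\emph{Sufficiency.} Let $G$ be a $p$-group that is neither cyclic nor generalized quaternion. By the classical theorem that a $p$-group with a unique subgroup of order $p$ is cyclic or generalized quaternion, $G$ has at least $p+1\ge 3$ subgroups of order $p$. I must build a Hamiltonian path in the complement of $\Gamma=\Gamma_{RP}(G)-e$. Two non-identity elements are nonadjacent in $\Gamma$ if and only if they generate the same cyclic subgroup or their cyclic subgroups are incomparable. In a $p$-group two cyclic subgroups are comparable if and only if they share the same subgroup of order $p$ and one contains the other.

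I therefore partition $G\setminus\{e\}$ into classes $C_1,\dots,C_m$ with $m\ge 3$, where $C_i$ consists of the elements whose cyclic group has $\Omega$-part equal to the $i$-th subgroup $P_i$ of order $p$. Any two elements from different classes are nonadjacent in $\Gamma$, hence adjacent in $\overline{\Gamma}$. So $\overline{\Gamma}$ contains the complete multipartite graph on the classes $C_i$, which has a Hamiltonian path provided no class exceeds half of $|G|-1$ plus one. More precisely, I need $|C_i|\le \sum_{j\ne i}|C_j|+1$; alternatively, I can use the extra edges of $\overline{\Gamma}$ inside a class.

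\emph{Main obstacle.} A single class may be large, for example in $\mathbb{Z}_{p^k}\times\mathbb{Z}_p$. I would handle this by also using complement edges inside $C_i$: elements generating the same cyclic subgroup, and incomparable cyclic subgroups within $C_i$. For instance, the generators of a single maximal cyclic subgroup form an independent set in $\Gamma$, hence a clique in $\overline{\Gamma}$, and can be traversed consecutively. Concretely, I would order each class by blocks of generators of the same cyclic subgroup, each block a clique in $\overline{\Gamma}$. I would then interleave blocks from different classes, entering and leaving each block through cross-class edges. Interleaving is possible because consecutive blocks from distinct classes are always joined. The only danger is being forced to place two blocks of the same class consecutively with comparable subgroups. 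If that cannot be avoided, I would fall back on the counting inequality, which I would verify using the fact that $G$ has at least $p+1$ subgroups of order $p$. I expect this balancing and counting step to be the delicate part.
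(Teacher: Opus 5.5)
Your necessity direction is sound, and more careful than the paper's proof, which only treats the ``if'' direction. The unique involution of $Q_{2^{k+1}}$ is a second universal vertex, so Theorem \ref{lower bound main thm} gives $\lambda\ge|G|+1$. You are also right that $G\cong\mathbb{Z}_p$ is a genuine exception to the ``only if'' part, since there $\lambda(\Gamma_{RP}(G))=p=|G|$.

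The gap is the sufficiency direction, which is the real content of the statement, and you do not prove it. The complete multipartite reduction fails in exactly the groups you mention. Your fallback counting inequality is false there, so it cannot be ``verified''. In $\mathbb{Z}_4\times\mathbb{Z}_2$, the class of $\langle(2,0)\rangle$ consists of $(2,0)$ and all four elements of order $4$. That is $5$ of the $7$ non-identity elements, while the other two classes are singletons, so $|C_i|\le\sum_{j\ne i}|C_j|+1$ would read $5\le 3$. The same failure occurs in every $\mathbb{Z}_{p^k}\times\mathbb{Z}_p$ with $k\ge 2$. In such groups you are forced to string many blocks of one class together. Moreover, the block of non-identity elements of $P_i$ is comparable with everything else in $C_i$, so it can only be entered and left through other classes. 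Showing that this can always be arranged for an arbitrary non-cyclic, non-quaternion $p$-group is precisely the step you leave as a hope.

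The paper avoids any construction. $\Gamma_{RP}(G)$ is a spanning subgraph of the power graph $\Gamma_P(G)$, so $\lambda(\Gamma_{RP}(G))\le\lambda(\Gamma_P(G))$ by Proposition \ref{theorem enhanced power reduced}. The Main Theorem of \cite{sarkar2023lambda} gives $\lambda(\Gamma_P(G))=|G|$ for $p$-groups that are neither cyclic nor generalized quaternion, and the lower bound of Theorem \ref{lower bound main thm} then forces equality. In your language: the Hamiltonian path in $\overline{\Gamma_P(G)-e}$ supplied by that theorem is already a Hamiltonian path in $\overline{\Gamma_{RP}(G)-e}$, which has more edges.

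If you want a self-contained argument instead, note two facts. The generators of a fixed cyclic subgroup form a clique in $\overline{\Gamma_{RP}(G)-e}$. Two such blocks are completely joined when their subgroups are incomparable, and have no edges between them otherwise. So a Hamiltonian path in the incomparability graph of the poset of nontrivial cyclic subgroups of $G$ would suffice. You would still need an argument producing such a path for every $p$-group in question, and the proposal does not contain one.
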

\begin{proof}
    From\cite [Main Theorem]{sarkar2023lambda}, if $G$ is neither cyclic nor generalized quaternion $2$-group, $\lambda(\Gamma_{P}(G))=|G|$.
    Since $\lambda(\Gamma _{RP}(G)) \leq \lambda(\Gamma _{P}(G))$, $\lambda(\Gamma_{RP}(G)) \leq|G|$. By Theorem \ref{lower bound main thm}, we get $\lambda(\Gamma_{RP}(G))=|G|$.
\end{proof}

\begin{theorem} \label{theorem finite p-groups}
For a finite $p$-group $G$,\\
    $$\lambda (\Gamma _{RP}(G))=\begin{cases}
        p^n+n-1 &; G=\mathbb Z_{p^n}\\
        |G|+1 &; \text{if $G$ is Generalized quaternion 2-group} \\
        |G| &; \text{otherwise}
    \end{cases}$$ 
    \end{theorem}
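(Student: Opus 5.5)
The plan is to split the finite $p$-groups $G$ into three disjoint classes: cyclic, generalized quaternion $2$-groups, and all others. Each class is then settled by a result already in the paper, apart from one remaining bound.

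\emph{Cyclic case.} If $G\cong\mathbb{Z}_{p^n}$, Proposition~\ref{pgroup cyclic lambda} gives $\lambda(\Gamma_{RP}(G))=p^n+n-1$ directly. That proof is by induction, using the decomposition
\[
\Gamma_{RP}(\mathbb{Z}_{p^{k+1}})=\Gamma_{RP}(\mathbb{Z}_{p^k})+\overline{K_{\phi(p^{k+1})}}
\]
together with the join formula of \cite[Lemma 3.4]{chang19962}. Nothing new is needed here.

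\emph{Remaining non-quaternion case.} If $G$ is neither cyclic nor a generalized quaternion $2$-group, Proposition~\ref{p group rest lambda} gives $\lambda(\Gamma_{RP}(G))=|G|$. The upper bound there comes from comparing with the power graph via Proposition~\ref{theorem enhanced power reduced} and \cite{sarkar2023lambda}. The lower bound is Proposition~\ref{thm lower bound reduced}.

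\emph{Generalized quaternion case.} Let $G=Q_{2^{m}}$ with $m\ge 3$, so $G=Q_{4n}$ with $n=2^{m-2}$ a power of $2$. Example~\ref{p group qn lambda} then gives $\lambda=4n+1=|G|+1$. To make this step self-contained and robust, I would argue as follows.
\begin{itemize}
\item The unique involution $x^n$ is contained in every nontrivial cyclic subgroup, so it is universal along with $e$. Hence $r=|\mathrm{Univ}(\Gamma_{RP}(G))|=2$.
\item No other element is universal, since $\langle y\rangle$ and $\langle x\rangle$ are incomparable.
\item Theorem~\ref{lower bound main thm} therefore yields $\lambda\ge |G|+2-1=|G|+1$.
\item For the upper bound, I would use the explicit labeling of Example~\ref{p group qn lambda} with span $4n+1$.
\end{itemize}

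\emph{Main obstacle.} The main work is verifying that this labeling of $Q_{4n}$ is a genuine $L(2,1)$-labeling. As a check, I would use the structure of the graph after deleting $e$ and $x^n$: it is the cyclic part $\langle x\rangle\setminus\{e,x^n\}$ (a reduced power graph on the $2$-power cyclic subgroup) together with $n$ disjoint edges $\{x^ky,x^{k+n}y\}$. These edges are non-adjacent because $\langle x^ky\rangle=\langle x^{k+n}y\rangle$.

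Alternatively, the upper bound follows more structurally. By the join formula, $\lambda=2r+\lambda'(H)=4+\lambda'(H)$, where $H=\Gamma_{RP}(G)-\{e,x^n\}$ and $|V(H)|=4n-2$. So it suffices to show $\lambda'(H)\le 4n-3$, that is, $c(\overline H)=1$ in the sense of Lemma~\ref{thm 1}. For this I would exhibit a Hamiltonian path in $\overline H$ by interleaving the $y$-coset elements with the powers of $x$. Any $x^ky$ is non-adjacent in $H$ to every $x^j$ with $j\ne n$, so such an interleaving is possible, and it ends with $x^ky,x^{k+1}y$ consecutive. This path construction is the step to check carefully; combined with the lower bound it gives $\lambda=|G|+1$, completing the proof.
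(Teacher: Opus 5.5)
Your proposal is correct and follows the paper's proof: the paper also just combines Proposition~\ref{pgroup cyclic lambda}, Example~\ref{p group qn lambda} and Proposition~\ref{p group rest lambda}, and your extra check of the quaternion case is sound and more self-contained than the paper's explicit labeling, which it asserts without verification (two universal vertices $e,x^n$ give the lower bound, and a Hamiltonian path in $\overline{H}$ gives the upper bound). One wording slip to fix: in $\Gamma_{RP}(Q_{4n})$ the pairs $\{x^ky,x^{k+n}y\}$ are not edges, since they generate the same cyclic subgroup, so all $2n$ coset elements are isolated in $H$; this is exactly why your interleaved path in $\overline{H}$ exists.
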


    \begin{proof}
    By Proposition \ref{pgroup cyclic lambda}, Example \ref{p group qn lambda} and Proposition \ref{p group rest lambda}, we get the required result.
\end{proof}

 \begin{proposition}\label{theorem value m=1}
    For any two distinct primes $p$ and $q$, suppose that $m$ is a positive integer such that $m \leq p-1$. Then, $$\lambda(\Gamma _ {RP}(\mathbb Z _{pq^m})=pq^m+1.$$
\end{proposition}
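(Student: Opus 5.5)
The plan is to use Lemma~\ref{thm 1}: it suffices to show that $c(\overline{\Gamma_{RP}(\mathbb Z_{pq^m})})=3$, since then $\lambda=n+3-2=n+1$ with $n=pq^m$. Write $N=pq^m$ and recall that in a cyclic group $\langle u\rangle$ is determined by $o(u)$. So two distinct vertices are adjacent in $\Gamma_{RP}$ exactly when their orders are distinct and one divides the other. For each divisor $d>1$ of $N$, let $C_d$ be the set of elements of order $d$. Each $C_d$ is a clique of size $\phi(d)$ in the complement, since elements of equal order generate the same subgroup. The non-identity divisors are $q^i$ ($1\le i\le m$) and $pq^j$ ($0\le j\le m$). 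Two classes $C_{q^i}$ and $C_{pq^j}$ are joined completely in the complement iff $q^i\nmid pq^j$, i.e.\ $j<i$. All other pairs of distinct classes have comparable orders, so there are no complement edges between them.

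\textbf{Lower bound $c\ge 3$.} In $\overline{\Gamma_{RP}}$ the identity $e$ is isolated. The generator class $C_{N}$ is comparable with every order, so it has no complement edges to any other class. The remaining non-identity elements form a nonempty set, since $C_{q}\neq\emptyset$. Hence the complement has at least three connected components, and every path covering needs at least three paths.

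\textbf{Upper bound $c\le 3$.} Take $\{e\}$ as one path and a Hamiltonian path of the clique $C_N$ as a second. It remains to find a single Hamiltonian path through $R=\bigcup_{i\ge1}A_i\cup\bigcup_{j<m}B_j$, where $A_i=C_{q^i}$ and $B_j=C_{pq^j}$. Each class is a clique, so it can be traversed internally in any order. We only need consecutive classes in the path to be completely joined, i.e.\ each adjacent pair has the form $(A_i,B_j)$ with $j<i$. I propose the order
\[
B_{m-1},\ A_m,\ B_{m-2},\ A_{m-1},\ B_{m-3},\ \dots,\ A_2,\ B_0,\ A_1 .
\]
The junctions are $A_i$–$B_{i-1}$ and $A_i$–$B_{i-2}$, and both satisfy $j<i$. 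Every class is nonempty, since $\phi(q^i)\ge1$ and $\phi(pq^j)\ge1$. For $m=1$ the path is simply $B_0,A_1$. This gives a Hamiltonian path of $\overline{\Gamma_{RP}}[R]$, so $c(\overline{\Gamma_{RP}(\mathbb Z_{pq^m})})=3$. Lemma~\ref{thm 1}(2) then yields $\lambda(\Gamma_{RP}(\mathbb Z_{pq^m}))=pq^m+1$.

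The main point to verify carefully is the adjacency bookkeeping in the complement: that elements of equal order are mutually non-adjacent in $\Gamma_{RP}$, and that the divisibility test $j<i$ is correct at each junction. The hypothesis $m\le p-1$ does not seem to be needed in this argument. If a gap appears, I would instead use the $p-1$ elements of order $p$ as individual connectors between successive $A_i$'s. That route needs $m-1\le p-1$ such connectors, which is presumably where the hypothesis enters in the authors' proof.
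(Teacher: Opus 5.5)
Your proof is correct, and it takes a different route from the paper's in the one place that matters: how the middle component of the complement is threaded. Both arguments reduce to $c(\overline{\Gamma_{RP}(\mathbb Z_{pq^m})})=3$ via Lemma~\ref{thm 1}(2), with components $\{e\}$, the generator clique, and the remaining vertices.

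The paper builds the Hamiltonian path of the remaining vertices element by element. It strings the $q$-power classes $Y^1,\dots,Y^m$ together using single elements $x_1,\dots,x_m$ of order $p$ as connectors. It also splices each class $Z^{i}$ of order $pq^{i}$ into $Y^{i+1}$, between $y^{i+1}_1$ and $y^{i+1}_2$. Each $Y^i$ block consumes one element of order $p$, which is exactly why it needs $m\le p-1$.

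You work at the level of whole order classes instead, and use the classes $B_j$ themselves as connectors in the zig-zag $B_{m-1},A_m,\dots,B_0,A_1$. Since $A_i$ is completely joined to both $B_{i-1}$ and $B_{i-2}$, you only need each class to be nonempty, and no counting condition arises. Your argument therefore proves the formula for every $m\ge 1$. For example, it covers $\mathbb Z_{18}$ with $p=2$, $q=3$, $m=2$, which lies outside the stated hypothesis. So the hypothesis $m\le p-1$ is superfluous.

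You also justify $c\ge 3$ explicitly by exhibiting three components, whereas the paper only asserts the component structure. Your closing fallback remark about using order-$p$ elements as connectors is not needed.
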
 

\begin{proof}
 Let $X = \{x_1, x_2, \hdots, x_{p-1}\}$ denote the set of elements of order $p$. Let $Y^i = \{y_1^i, y_2^i, \hdots, y^i_{\phi(q^i)}\}$ denote the set of elements of order $q^i, i \leq m$. Let $Z^{i} = \{z_1^i, z_2^i, \hdots, z^i_{\phi(pq^i)}\}$ denote the set of elements of order $pq^i, i \leq m-1$.
    %Now, let $X\equiv $ be the set of all elements of order $p$, say $X=\{x_1,x_2,\cdots ,x_{p-1}\}$\\$Y^i\equiv $ the set of all elements of order $q^i$,$\forall i=1,2,\cdots ,m$, say $Y=\{y^i_1,y^i_2,\cdots ,y^i_{\phi(q^i)}\}$.\\$Z^i\equiv $ the set of all elements of order $pq^i$,$\forall i=1,2,\cdots ,m-1$, say $Z=\{z^i_1,z^i_2,\cdots ,z^i_{\phi(pq^i)}\}$.\\
    For $m\leq p-1$, $\overline{\Gamma _{RP}(X \bigcup_{i=1}^{m} Y^i \bigcup_{i=1}^{m-1}  Z^i) }$ contains a Hamiltonian path, that is given by
    \begin{center}
        $x_1 \sim y^1_1\sim y^1_2\sim \cdots \sim y^1_{\phi(q)}\sim x_2\sim y^2_1\sim z^1_1\sim z^1_2\sim \cdots \sim z^1_{\phi(pq)}\sim y^2_2\sim\cdots \sim y^2_{\phi(q^2)}\sim x_3\sim\cdots  \sim y^{m-1}_{\phi(pq^{m-1}}\sim x_m\sim y^m_1\sim z^{m-1}_1\sim z^{m-1}_2\sim\cdots \sim z^{m-1}_{\phi(pq^{m-1})}\sim y^m_2\sim\cdots \sim y^m_{\phi(q^m)}\sim x_{m+1}\sim \cdots \sim x_{\phi (p)}\sim x_1$.
    \end{center}
    The connected components of $\overline{\Gamma _ {RP}(\mathbb Z _{pq^m})}$ are $K_1$, $\overline{\Gamma _{RP}(X \bigcup_{i=1}^{m} Y^i \bigcup_{i=1}^{m-1}  Z^i) }$ and $K_{\phi (pq^m)}$, that is  $c(\overline{\Gamma _ {RP}(\mathbb Z _{pq^m})})=3$. By Lemma \ref{thm 1}, $\lambda(\Gamma _ {RP}(\mathbb Z _{pq^m})=pq^m+1$ for $m \leq p-1$.
\end{proof}

\begin{corr} \label{corr pq}
    Let $G$ be a group of order $pq$, where $p$ and $q$ are distinct primes. Then, 
   $ \lambda (\Gamma _{RP}(G))=\begin{cases}
        pq+1, &\text{when $G$ is cyclic.} \\
        pq, & \text{when $G$ is non-abelian} 
    \end{cases}$
\end{corr}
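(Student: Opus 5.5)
Let $p<q$ without loss of generality. A group of order $pq$ is either cyclic, $G\cong \mathbb{Z}_{pq}$, or non-abelian. The non-abelian case occurs only when $p\mid q-1$, and then $G\cong \mathbb{Z}_q\rtimes\mathbb{Z}_p$. I will treat the two cases separately.

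\textbf{Cyclic case.} Write $\mathbb{Z}_{pq}$ as $\mathbb{Z}_{p'q'^m}$ with $m=1$, where $p'$ and $q'$ are the two primes in either order. The hypothesis $m\le p'-1$ of Proposition \ref{theorem value m=1} then reads $1\le p'-1$, which holds for every prime. So Proposition \ref{theorem value m=1} gives $\lambda(\Gamma_{RP}(\mathbb{Z}_{pq}))=pq+1$ directly.

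As a sanity check on that proposition, I would also verify the path-cover structure directly. In $\overline{\Gamma_{RP}(\mathbb{Z}_{pq})}$ the identity is isolated. The $\phi(pq)$ generators form a clique $K_{\phi(pq)}$: generators are mutually non-adjacent in $\Gamma_{RP}$ because they generate the same subgroup, but each is adjacent to every element of order $p$ or $q$. The elements of order $p$ and order $q$ together induce a complete multipartite-type complement that has a Hamiltonian path. This gives $c=3$, and Lemma \ref{thm 1} yields $\lambda=pq+3-2=pq+1$.

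\textbf{Non-abelian case.} I will apply Proposition \ref{lower bound reduced power eq to n}, so it suffices to find a Hamiltonian path in $\overline{\Gamma_{RP}(G)-e}$.

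In $G$ every non-identity element has order $p$ or $q$, since there are no elements of order $pq$. Hence two distinct non-identity elements are adjacent in $\Gamma_{RP}(G)$ only if one generates a proper subgroup of the other's cyclic group. That is impossible here, because all such cyclic groups have prime order. So $\Gamma_{RP}(G)-e$ is edgeless, and its complement is the complete graph $K_{pq-1}$, which trivially has a Hamiltonian path. Proposition \ref{lower bound reduced power eq to n} then gives $\lambda(\Gamma_{RP}(G))=pq$.

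Alternatively, $\Gamma_{RP}(G)$ is the star $K_1+(pq-1)K_1$ of order $pq\ge 6$, and Corollary \ref{star} gives the same value.

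\textbf{Main obstacle.} The only delicate point is the cyclic case: checking that Proposition \ref{theorem value m=1} applies regardless of which prime plays the role of $p$, or equivalently confirming the Hamiltonian path among the elements of order $p$ and $q$. For that path I would alternate elements of order $p$ and order $q$, which are pairwise non-adjacent in $\Gamma_{RP}$, and append leftover elements of the same order consecutively, since same-order elements are also non-adjacent. This handles the imbalance $q-1>p-1$ without difficulty.
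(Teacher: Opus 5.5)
Your proposal is correct and follows the paper's proof. The cyclic case is Proposition~\ref{theorem value m=1} with $m=1$, and the non-abelian case comes down to $\Gamma_{RP}(G)$ being a star; your primary route through Proposition~\ref{lower bound reduced power eq to n} is just a repackaging of that observation. One small slip in your sanity check: the elements of orders $p$ and $q$ are pairwise non-adjacent in $\Gamma_{RP}$, so they induce a complete graph $K_{p+q-2}$ in the complement, not a multipartite one, which makes the Hamiltonian path immediate.
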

    
\begin{proof}
    If $G$ is a non-abelian group of order $pq$, then $\Gamma _{RP}(G)$ is a star graph. By Corollary \ref{star}, $\lambda (\Gamma _{RP}(G))=pq$.\\
    If $G$ is cyclic, taking $m=1$ in Proposition \ref{theorem value m=1}, we get $\lambda (\Gamma _{RP}(\mathbb Z_{pq})=pq+1$.
\end{proof}

\begin{proposition}
   Let $G$ be a non-trivial finite simple group of order $n$. Then $\lambda(\Gamma_{RP}(G))=n$.
\end{proposition}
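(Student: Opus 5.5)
We split into the abelian and non-abelian cases. If $G$ is abelian and simple, then $G\cong\mathbb{Z}_p$ for a prime $p$ and $n=p$. Every non-identity element generates all of $G$. So the only proper containments of cyclic subgroups are $\langle e\rangle\subset\langle x\rangle$, and $\Gamma_{RP}(G)$ is the star $K_1+(p-1)K_1$. For $p\ge 5$, Corollary \ref{star} gives $\lambda(\Gamma_{RP}(G))=p=n$. The small cases are checked by hand: $p=2$ gives $K_2$ with $\lambda=2$, and $p=3$ gives the path $P_3$ with $\lambda=3$.

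Now let $G$ be non-abelian simple. The lower bound $\lambda(\Gamma_{RP}(G))\ge n$ is Proposition \ref{thm lower bound reduced}. By Proposition \ref{lower bound reduced power eq to n}, it suffices to exhibit a Hamiltonian path in $\overline{\Gamma_{RP}(G)-e}$. We describe this complement first. Two non-identity elements $u,v$ are adjacent in it iff neither $\langle u\rangle\subset\langle v\rangle$ nor $\langle v\rangle\subset\langle u\rangle$. Hence each class $[u]=\{v:\langle v\rangle=\langle u\rangle\}$ is a clique of the complement. Two distinct classes are joined completely (every vertex of one to every vertex of the other) iff the corresponding cyclic subgroups are incomparable. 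It is therefore enough to order the non-trivial cyclic subgroups $C_1,\dots,C_t$ so that consecutive ones are incomparable. We then traverse the generators of $C_1$, then those of $C_2$, and so on, which yields a Hamiltonian path.

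To build the ordering, we group the cyclic subgroups by order $d$. Subgroups of equal order are always incomparable, so within a fixed order they may be listed arbitrarily. Burnside's $p^aq^b$ theorem gives at least three prime divisors of $n$. Moreover, cyclic subgroups of distinct prime orders are incomparable. Our plan is to interleave the layers: list the subgroups of composite order, and use subgroups of prime order $r$ as bridges between consecutive layers, choosing $r$ not dividing the neighbouring orders. Such an $r$ exists because at least three primes are available.

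The main obstacle is making every junction valid. For example, at the end of a layer of order $d$, we need a next subgroup not contained in and not containing the last one, and the bridges of each prime order must not run out. We would first try to handle this by ordering layers by decreasing order and, at each transition, choosing the last subgroup of one layer and the first of the next to be incomparable. This is possible since a fixed cyclic subgroup contains only one subgroup of each order, while a simple group has many conjugate cyclic subgroups of each order $d\mid n$ that occurs. If this bookkeeping becomes unwieldy, the fallback is Proposition \ref{theorem enhanced power reduced}: it suffices to show $\lambda(\Gamma_P(G))\le n$. For that, we would invoke the known result that the power graph of a finite non-abelian simple group has $\lambda$-number equal to its order.
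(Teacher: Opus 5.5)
Your proposal is correct, and its main route differs from the paper's. The paper constructs nothing. For non-cyclic simple $G$ it quotes \cite[Theorem 3.6]{parveen2025lambda} for $\lambda(\Gamma_{EP}(G))=n$. It then applies the chain $\lambda(\Gamma_{RP}(G))\le\lambda(\Gamma_P(G))\le\lambda(\Gamma_{EP}(G))$ of Proposition~\ref{theorem enhanced power reduced} and the lower bound of Proposition~\ref{thm lower bound reduced}. The case $G\cong\mathbb{Z}_p$ is read off from Theorem~\ref{theorem finite p-groups}. Your fallback is this argument, routed through $\Gamma_P$ instead of $\Gamma_{EP}$.

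Your direct route via Proposition~\ref{lower bound reduced power eq to n} is self-contained, and your ``first try'' already completes it. You can therefore drop the bridging device and Burnside's theorem. Its justification is also flawed: ``at least three primes are available'' does not give a prime avoiding both neighbouring orders. For example, $A_{10}$ has elements of orders $21$ and $10$, which together use every prime dividing $|A_{10}|$.

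Concretely, the first try runs as follows. List the layers by strictly decreasing order $d_1>\cdots>d_k$. When passing from a layer whose last entry is $A$ to the layer of order $d_{i+1}$, start with some $B$ of order $d_{i+1}$ with $B\not\subseteq A$, and list the rest of that layer arbitrarily. Then $A\not\subseteq B$ since $|A|>|B|$. Such a $B$ exists because $A$ has at most one subgroup of order $d_{i+1}$, whereas $G$ has at least two cyclic subgroups of that order: a unique one would be characteristic, hence normal, and it is nontrivial and proper since $G$ is not cyclic. Only the first entry of each layer is constrained, so no choices conflict.

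What your route buys is independence from the external enhanced-power-graph theorem and a more general conclusion. The same argument gives $\lambda(\Gamma_{RP}(G))=|G|$ for every nontrivial finite group with no nontrivial normal cyclic subgroup. The paper's route is shorter but rests entirely on the cited result. Your explicit treatment of $p=2,3$ is also more careful than the paper's, whose Corollary~\ref{star} is stated only for order at least $4$.
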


\begin{proof}
    Let $G$ be a non-trivial finite simple group of order $n$. From \cite[Theorem 3.6]{parveen2025lambda}, if $G$ is not a cyclic of order $n\geq 3$, then $\lambda(\Gamma_{EP}(G))=n$. By Proposition \ref{theorem enhanced power reduced}, we get $\lambda(\Gamma_{RP}(G)) \leq n$. Now, by Proposition \ref{thm lower bound reduced}, we get $\lambda(\Gamma_{RP}(G))=n$. If $G$ is a cyclic-simple group, then by Theorem \ref{theorem finite p-groups}, $\lambda(\Gamma_{RP}(G))=n$.
\end{proof}

\begin{proposition}
    Let $G'$ be a non-trivial nilpotent group of odd order having no cyclic Sylow subgroup.If $G \cong G' \times \mathbb{Z}_n$,where $n \geq 1$ and $\gcd(n,|G'|)=1$. Then $\lambda(\Gamma_{RP}(G))=|G|+r-1$, where $r$ is the number of universal vertices of $\Gamma_{RP}(G)$.
\end{proposition}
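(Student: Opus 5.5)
The lower bound $\lambda(\Gamma_{RP}(G))\ge |G|+r-1$ is exactly Theorem \ref{lower bound main thm}, once we know $1\le r<|G|$. Here $e$ is always universal, and $G$ is non-cyclic, so not every element is universal. So the work lies in the matching upper bound. I would first identify $\mathrm{Univ}(\Gamma_{RP}(G))$. Since $G'$ has no cyclic Sylow subgroup, $G'$ is non-cyclic. A universal vertex $u$ must be comparable, via proper inclusion of cyclic subgroups, with every other element. Write $u=(a,b)$ with $a\in G'$ and $b\in\mathbb{Z}_n$. Testing $u$ against elements $(g,1)$ with $g\in G'$ of prime order in each Sylow subgroup should force $a=e$. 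Indeed, two non-comparable cyclic subgroups of order $p$ exist in each non-cyclic Sylow $p$-subgroup, so $\langle a\rangle$ can contain neither of them unless $a$ is trivial. Then $u=(e,b)$ must also avoid being a generator-equivalent of another vertex: if $\langle u\rangle=\langle v\rangle$ with $u\ne v$, then $u\nsim v$. This should pin down $r$ as the number of elements $b\in\mathbb{Z}_n$ whose generated subgroup has no other generator, i.e.\ $\phi(o(b))=1$. That gives $r=1$ for $n$ odd, and presumably $r=2$ if $n$ is even, though here $\gcd(2,n)$ is not constrained. I would state $r$ explicitly but keep the proof phrased in terms of $r$.

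For the upper bound, I would use the decomposition $\Gamma=K_r+H$ from the proof of Theorem \ref{lower bound main thm}, together with \cite[Lemma 3.4]{chang19962}, giving $\lambda(\Gamma)=2r+\lambda'(H)$. It then suffices to show $\lambda'(H)=|V(H)|-1$, i.e.\ an injective labeling of $H$ with consecutive labels $0,\dots,|V(H)|-1$. Such a labeling exists exactly when $\overline{H}$ has a Hamiltonian path, since consecutive labels must go to non-adjacent vertices of $H$. This is the same mechanism as Proposition \ref{lower bound reduced power eq to n}, now applied to $H$ rather than $\Gamma_{RP}(G)-e$. Equivalently, via Lemma \ref{thm 1}, one shows $c(\overline{\Gamma})=r+1$: the $r$ isolated vertices plus one path through $\overline{H}$.

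The main obstacle is constructing a Hamiltonian path in $\overline{H}$, i.e.\ ordering the non-universal elements so that consecutive ones generate non-comparable cyclic subgroups or the same cyclic subgroup. My plan is to exploit that $G'$ is a non-cyclic odd-order nilpotent group. The $\lambda$-number of its power graph, or of its reduced power graph, is $|G'|$ by Theorem \ref{theorem finite p-groups} applied Sylow-wise, or by \cite{sarkar2023lambda}. This supplies a Hamiltonian path in $\overline{\Gamma_{RP}(G')-e}$. I would then lift it to $G'\times\mathbb{Z}_n$ by replacing each $g\ne e$ by the block $(g,b_1),\dots,(g,b_n)$. Within a block, two elements may be comparable, so the block must be ordered cleverly, or interleaved with a neighbouring block $(g',\ast)$ with $\langle g\rangle,\langle g'\rangle$ incomparable. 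Then $(g,b)$ and $(g',b')$ are incomparable regardless of $b,b'$, because cyclic subgroups of a coprime direct product split as products. Alternating between consecutive path vertices $g,g'$ handles this. The remaining non-universal elements $(e,b)$ are inserted next to some $(g,b'')$ with $\langle b\rangle\not\subseteq\langle b''\rangle$, or adjacent to elements of the same cyclic subgroup; finding such slots is where care is needed.

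Finally, combining $\lambda'(H)=|G|-r-1$ with $\lambda(\Gamma)=2r+\lambda'(H)$ gives $\lambda(\Gamma_{RP}(G))=|G|+r-1$.
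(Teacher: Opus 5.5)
\textbf{Main gap: the Hamiltonian path is never constructed.} Your reduction is correct. Writing $\Gamma=\Gamma_{RP}(G)=K_r+H$, \cite[Lemma 3.4]{chang19962} turns the upper bound into the existence of a Hamiltonian path in $\overline{H}$. But that path is the whole content of the proposition, and you only outline it; the outlined pieces do not yet fit together (below, $\mathbb{Z}_n$ is written additively).

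(i) Theorem \ref{theorem finite p-groups} and \cite{sarkar2023lambda} concern $p$-groups only. When $G'$ has two or more Sylow subgroups, building a path for $G'$ from the Sylow paths is itself a product-lifting problem of the kind you are trying to solve.

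(ii) Your claim that $(g,b)$ and $(g',b')$ are non-adjacent for all $b,b'$ needs $\langle g\rangle$ and $\langle g'\rangle$ to be incomparable. A path in $\overline{\Gamma_{RP}(G')-e}$ may step between two generators of the same cyclic subgroup, and then $(g,b)\sim(g',b')$ as soon as $\langle b\rangle\subsetneq\langle b'\rangle$.

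(iii) Placing the $n-1$ vertices $(e,b)$, $b\neq 0$, is a real obstacle, not a detail. For $g\neq e$ we have $(e,b)\nsim(g,c)$ exactly when $\langle b\rangle\not\subseteq\langle c\rangle$. The vertices $(g,0)$, which work for every $b$, number only $|G'|-1$, while $n$ can be far larger. For $n=2^k$ the subgroups of $\mathbb{Z}_n$ form a chain. So the vertices $(e,b)$ split into $k$ cliques of $\overline{\Gamma}$ with no edges between them, and the involution $(e,n/2)$ has precisely the vertices $(g,0)$, $g\neq e$, as its neighbours in $\overline{\Gamma}$. Without an explicit ordering that handles all of this, the upper bound remains unproved.

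\textbf{Your value of $r$ is wrong for even $n$.} A universal vertex $u\neq e$ must be the only generator of $\langle u\rangle$, since any other generator $v$ has $\langle v\rangle=\langle u\rangle$ and hence $u\nsim v$. So $o(u)=2$ and $u=(e,n/2)$. But for $g\in G'\setminus\{e\}$ the group $\langle (g,0)\rangle$ has odd order, so $(e,n/2)\nsim(g,0)$; your own test elements already rule it out. Thus $r=1$ for every $n$, and the statement says $\lambda(\Gamma_{RP}(G))=|G|$. By Proposition \ref{lower bound reduced power eq to n}, what you must produce is a Hamiltonian path in $\overline{\Gamma_{RP}(G)-e}$, and it must pass through $(e,n/2)$. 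Treating $(e,n/2)$ as universal would make $\Gamma=K_2+H$ false and aim you at the wrong value $|G|+1$.

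\textbf{Comparison with the paper.} The paper's own argument is a citation. It combines $\lambda(\Gamma_{RP}(G))\le\lambda(\Gamma_{EP}(G))$ (Proposition \ref{theorem enhanced power reduced}), the value from \cite[Theorem 3.13]{parveen2025lambda}, and Theorem \ref{lower bound main thm}. That shortcut closes only for $n=1$. Every element of $\{e\}\times\mathbb{Z}_n$ is universal in $\Gamma_{EP}(G)$, so Theorem \ref{lower bound main thm} gives $\lambda(\Gamma_{EP}(G))\ge |G|+n-1>|G|$ when $n>1$. So for $n>1$ a direct construction like yours is genuinely required.

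The case $n=1$ of the cited result does repair (i) and (ii). There $\Gamma_{EP}(G')$ has only $e$ as a universal vertex, so the result gives $\lambda(\Gamma_{EP}(G'))=|G'|$, and Lemma \ref{thm 1} then yields a Hamiltonian path in $\overline{\Gamma_{EP}(G')-e}$. Consecutive vertices on this path generate a non-cyclic subgroup, so their cyclic subgroups are incomparable, which is exactly what your block interleaving needs. The substantive work left is (iii).
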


\begin{proof}
     From \cite[Theorem 3.13]{parveen2025lambda}, $\lambda(\Gamma_{EP}(G))=|G|+r-1$. By Proposition \ref{theorem enhanced power reduced}, we get $\lambda(\Gamma_{RP}(G)) \leq |G|+r-1$. Now, by Theorem \ref{lower bound main thm}, we get $\lambda(\Gamma_{RP}(G))=|G|+r-1$.
\end{proof}

\begin{proposition}
     Let $G'$ be a nilpotent group of odd order having no cyclic Sylow subgroup. If $G \cong G' \times P \times \mathbb{Z}_n$, where $P$ is a non-cyclic $2$-group  and $\gcd(n,|G'|)=\gcd(2,n)=1$, then $\lambda(\Gamma_{RP}(G))=|G|+r-1$, where $r$ is the number of universal vertices of $\Gamma_{RP}(G)$.
\end{proposition}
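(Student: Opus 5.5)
The proof will follow the same sandwich argument as the preceding proposition for $G' \times \mathbb{Z}_n$.

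For the lower bound, note that $e$ is adjacent to every other vertex of $\Gamma_{RP}(G)$, so $r \geq 1$. Since $P$ is a non-cyclic $2$-group, $G$ is not cyclic, so not every vertex is universal and $r<|G|$. Theorem \ref{lower bound main thm} therefore gives $\lambda(\Gamma_{RP}(G)) \geq |G|+r-1$.

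For the upper bound, I would invoke the corresponding result of \cite{parveen2025lambda} for the enhanced power graph of groups of the form $G' \times P \times \mathbb{Z}_n$, with $P$ a non-cyclic $2$-group and the stated coprimality conditions. That result gives $\lambda(\Gamma_{EP}(G)) = |G| + r_{EP} - 1$, where $r_{EP}$ is the number of universal vertices of $\Gamma_{EP}(G)$. Proposition \ref{theorem enhanced power reduced} then yields $\lambda(\Gamma_{RP}(G)) \leq |G|+r_{EP}-1$. Combined with the lower bound, this gives the claim provided $r_{EP}=r$.

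This identification is the main obstacle. Because $\Gamma_{RP}(G)$ is a spanning subgraph of $\Gamma_{EP}(G)$, we always have $\operatorname{Univ}(\Gamma_{RP}(G)) \subseteq \operatorname{Univ}(\Gamma_{EP}(G))$. The reverse inclusion can fail: an element $g$ that generates a maximal cyclic subgroup $\langle g\rangle$ is not adjacent in $\Gamma_{RP}$ to its other generators. So $r$ may be strictly smaller than $r_{EP}$, and the enhanced power graph bound would then be too weak.

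In that case I would replace the upper bound with an explicit construction via Lemma \ref{thm 1}. Write $\Gamma_{RP}(G)=K_r+H$ with $H=\operatorname{Gr}[V\setminus \operatorname{Univ}]$. By Equation (\ref{eq 1}) it suffices to show $\lambda'(H)=|V(H)|-1$, that is, $\overline{H}$ has a Hamiltonian path. The steps would be:
\begin{enumerate}
\item Use the non-cyclic $2$-group $P$ to find many elements with pairwise incomparable cyclic subgroups, for example the involutions or elements of $P$ generating distinct maximal cyclic subgroups, multiplied by elements of $G'\times\mathbb{Z}_n$.
\item Link the classes of non-universal elements, grouped by the cyclic subgroup they generate, along a path in $\overline{H}$, in the style of the Hamiltonian path built in Proposition \ref{theorem value m=1}.
\item Interleave elements from different maximal cyclic subgroups so that consecutive vertices of the path are non-adjacent in $\Gamma_{RP}(G)$.
\end{enumerate}
The care is needed in showing every generator class of a maximal cyclic subgroup can be threaded into the path. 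I expect this to work because $P$ has at least three maximal cyclic subgroups (a non-cyclic $2$-group is not a union of two proper subgroups), so no class is forced to be isolated.
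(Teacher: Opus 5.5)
\textbf{There is a genuine gap: the upper bound $\lambda(\Gamma_{RP}(G))\le|G|+r-1$ is never established.} Your lower bound is fine, and your suspicion about the enhanced power graph is well founded. It is exactly the weak point of the sandwich. That sandwich is also the paper's own argument: it quotes \cite[Theorem 3.16]{parveen2025lambda} with the same letter $r$, but for $\Gamma_{EP}(G)$ the relevant count is the number of universal vertices of $\Gamma_{EP}(G)$.

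The two counts really do differ here. In $\Gamma_{RP}(G)$ an element of order at least $3$ is not adjacent to its inverse. An involution is universal only if it lies in every nontrivial cyclic subgroup, which forces $G$ to be a $2$-group with a unique involution. Hence here $r=1$, except that $r=2$ when $G\cong Q_{2^k}$, a case already settled by Example \ref{p group qn lambda}. On the other hand, every element of $\{e\}\times\{e\}\times\mathbb{Z}_n$ is universal in $\Gamma_{EP}(G)$. So Theorem \ref{lower bound main thm} applied to $\Gamma_{EP}(G)$ gives $\lambda(\Gamma_{EP}(G))\ge |G|+n-1$. For $n>1$, no bound routed through $\Gamma_{EP}(G)$ can reach the target $|G|+r-1=|G|$.

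For example, take $G=\mathbb{Z}_2\times\mathbb{Z}_2\times\mathbb{Z}_3$, writing $\mathbb{Z}_2\times\mathbb{Z}_2=\{e,a,b,ab\}$ and $\mathbb{Z}_3=\langle c\rangle$. The target is $12$, whereas $\lambda(\Gamma_{EP}(G))\ge 14$. The target is nevertheless attained: $c,c^2,a,bc,bc^2,ab,ac,ac^2,b,abc,abc^2$ is a Hamiltonian path in $\overline{\Gamma_{RP}(G)-e}$. So the statement survives this example; only the route fails.

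Everything therefore rests on your fallback, and that is where the gap lies. For $r=1$ you need, by Proposition \ref{lower bound reduced power eq to n}, an explicit Hamiltonian path in $\overline{\Gamma_{RP}(G)-e}$. Your steps 1--3 only describe what such a path should do. The observation that $P$ has at least three maximal cyclic subgroups, so that ``no class is isolated'', is far weaker than Hamiltonicity. Your proposed raw material, the involutions of $P$, is also unavailable when $P$ is generalized quaternion. There the involution $z$ of $P$ lies in every cyclic subgroup of even order, so in the complement its only neighbours are the non-identity elements of $K=G'\times\mathbb{Z}_n$. Such low-degree vertices have to be routed deliberately.

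Some facts that help organize a construction follow from writing $G=P\times K$ with $\gcd(|P|,|K|)=1$. Then $\langle(a,b)\rangle=\langle a\rangle\times\langle b\rangle$, and comparability of cyclic subgroups is checked componentwise. If $\langle a\rangle$ and $\langle a'\rangle$ are incomparable in $P$, every vertex of $\{a\}\times K$ is adjacent in the complement to every vertex of $\{a'\}\times K$. Also, each $(a,e)$ with $a\neq e$ is adjacent in the complement to each $(e,b)$ with $b\neq e$. These facts let you interleave blocks, but the path itself still has to be written down and checked. Until it is, the upper bound, which is the whole content of the statement, remains unproved.
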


\begin{proof}
     From \cite[Theorem 3.16]{parveen2025lambda}, $\lambda(\Gamma_{EP}(G))=|G|+r-1$. By Proposition\ref{theorem enhanced power reduced}, we get $\lambda(\Gamma_{RP}(G)) \leq |G|+r-1$. Now, by Theorem \ref{lower bound main thm}, we get $\lambda(\Gamma_{RP}(G))=|G|+r-1$.
\end{proof}

\begin{theorem}\label{upper bound main thm}
  Let \(G\) be a finite group such that \(G\not\cong \mathbb{Z}_{p^r}\) for any prime \(p\) and any positive integer \(r\). Then $\lambda(\Gamma_{RP}(G)) \leq 2n-4$ with equality if and only if $G$ is isomorphic to $\mathbb Z_2 \times \mathbb Z_2$. 
\end{theorem}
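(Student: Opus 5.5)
The plan is to translate the statement into one about path coverings via Lemma \ref{thm 1} and then argue combinatorially in $\overline{\Gamma_{RP}(G)}$. Write $n=|G|$ and assume $G$ is non-trivial. The identity $e$ is universal in $\Gamma_{RP}(G)$, so $e$ is an isolated vertex of $\overline{\Gamma_{RP}(G)}$. Hence $c:=c(\overline{\Gamma_{RP}(G)})\ge 2$, and Lemma \ref{thm 1}(2) gives $\lambda(\Gamma_{RP}(G))=n+c-2$. The statement is therefore equivalent to
\[
c\le n-2, \qquad\text{with equality iff } G\cong\mathbb Z_2\times\mathbb Z_2 .
\]
A path covering of a graph on $n$ vertices is the same as a linear forest, that is, a spanning subgraph whose components are paths. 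If the forest has $t$ edges, it uses exactly $n-t$ paths. So it suffices to show two things. First, $\overline{\Gamma_{RP}(G)}-e$ always contains a linear forest with $3$ edges unless $G\cong \mathbb Z_2\times\mathbb Z_2$. Second, for $\mathbb Z_2\times\mathbb Z_2$ the maximum is exactly $2$. Throughout I use the basic fact that $u\nsim v$ in $\Gamma_{RP}(G)$ whenever $\langle u\rangle=\langle v\rangle$ or the two cyclic subgroups are incomparable.

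\emph{Case 1: $G$ cyclic.} Since $G$ is not of prime power order, $n$ has two distinct prime divisors $p,q$, and at least one of them, say $q$, is odd. Take $a$ of order $p$ and two distinct elements $b,b'$ of order $q$. Then $a$ and $b$ have incomparable cyclic subgroups, while $\langle b\rangle=\langle b'\rangle$, so $a-b-b'$ is a path in the complement. Since $n\ge 6$ we have $\varphi(n)\ge 2$, so we can take two distinct generators $g,g'$ of $G$; they give a disjoint edge $g-g'$. This is a linear forest with $3$ edges, so $c\le n-3$ and $\lambda(\Gamma_{RP}(G))\le 2n-5$.

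\emph{Case 2: $G$ non-cyclic.} Consider the maximal cyclic subgroups of $G$. Every element lies in one of them, and a group is never the union of two proper subgroups, so there are at least three, $C_1,C_2,C_3$. Choose generators $x_i$ of $C_i$. By maximality these subgroups are pairwise incomparable, so $x_1,x_2,x_3$ span a triangle in the complement.

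If some $|C_i|\ge 3$, say $|C_1|\ge3$, pick a second generator $x_1'$ of $C_1$. Then $x_1'-x_1-x_2-x_3$ is a path with $3$ edges. Here $x_1'\nsim x_1$ because the two elements generate the same subgroup, which gives $c\le n-3$.

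Otherwise every maximal cyclic subgroup has order $2$, so $G$ has exponent $2$ and is elementary abelian, $G\cong\mathbb Z_2^k$ with $k\ge2$. All non-identity elements are then pairwise non-adjacent in $\Gamma_{RP}(G)$, so $\overline{\Gamma_{RP}(G)}=K_1\cup K_{n-1}$ and $c=2$. This is at most $n-3$ whenever $n\ge 8$, i.e. $k\ge 3$. For $k=2$ we get $n=4$ and $c=2=n-2$, so $\lambda=n+c-2=2n-4=4$, giving equality exactly for $\mathbb Z_2\times\mathbb Z_2$.

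The main obstacle is the non-cyclic case. There one must produce three incomparable vertices that are not all of order $2$, and the union-of-two-subgroups argument guaranteeing at least three maximal cyclic subgroups is the step that makes this work. The remaining verifications, namely the adjacencies and the count of paths in the forest, are routine.
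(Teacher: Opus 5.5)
Your proof is correct, and it takes a different route from the paper. The paper never works with path coverings for this theorem. It quotes the power-graph result of Ma et al., namely $\lambda(\Gamma_P(G))<2n-4$ for every $G$ other than $\mathbb{Z}_{p^r}$, $\mathbb{Z}_2\times\mathbb{Z}_2$ and $\mathbb{Z}_{2q}$. It transfers this to $\Gamma_{RP}(G)$ via the spanning-subgraph monotonicity $\lambda(\Gamma_{RP}(G))\le\lambda(\Gamma_P(G))$. It then handles the leftover families by hand: $\mathbb{Z}_{2q}$ through the earlier computation $\lambda(\Gamma_{RP}(\mathbb{Z}_{pq}))=pq+1$, and $\mathbb{Z}_2\times\mathbb{Z}_2$ as a star. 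That separate treatment is forced, because $\mathbb{Z}_{2q}$ (like $\mathbb{Z}_2\times\mathbb{Z}_2$) attains exactly $2n-4$ for the power graph, so the inherited bound is not strict there. You instead apply Lemma~\ref{thm 1} directly to $\Gamma_{RP}(G)$ and exhibit a $3$-edge linear forest in the complement. In the cyclic case you use two generators together with elements of two distinct prime orders. In the non-cyclic case you use three maximal cyclic subgroups, which exist because no group is a union of two proper subgroups. Your argument is self-contained and elementary. It gives $\lambda(\Gamma_{RP}(G))\le 2n-5$ for every non-exceptional $G$ in one stroke, so $\mathbb{Z}_{2q}$ needs no separate computation, and the equality case falls out of the elementary abelian subcase. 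The paper's proof is shorter on the page, but the real work is hidden in the cited power-graph theorem. Its advantage is that it places the result inside the chain $\Gamma_{RP}(G)\subseteq\Gamma_P(G)\subseteq\Gamma_{EP}(G)$ that the paper exploits throughout.
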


\begin{proof}
    Let $G$ be a finite group other than $ \mathbb{Z}_{p^r}, \mathbb Z_2 \times \mathbb Z_2, \mathbb Z_{2q}$, where $p$ is a prime, $r$ a
positive integer and $q$ an odd prime. Then from \cite[Theorem 4.1]{ma2021lambda}, $\lambda(\Gamma_{P}(G)) < 2n-4$. By Proposition \ref{theorem enhanced power reduced}, we get $\lambda(\Gamma_{RP}(G)) < 2n-4.$
    From Corollary \ref{corr pq}, $\lambda (\Gamma _{RP}(\mathbb Z_{2q})=2q+1<2n-4$. Since $\Gamma _{RP} (\mathbb Z_2 \times \mathbb Z_2)$ is a star graph, $\lambda(\Gamma _{RP} (\mathbb Z_2 \times \mathbb Z_2))=4$.
\end{proof}

%\begin{proposition} \label{upper bound second thm} Let $G$ be a cyclic group of order $n=p^r$. Then, $\lambda(\Gamma _{RP} (G))=2n-3$ if and only if $G=\mathbb Z_3$ or $G=\mathbb Z_4$. \end{proposition} \begin{proof}  From Theorem \ref{upper bound main thm}, it is clear that $\lambda(\Gamma _{RP} (G))=2n-3$ is possible for a cyclic group of prime power order. Now, suppose that    $$ \lambda(\Gamma _{RP}(\mathbb Z_{p^r}))= p^r+r-1=2n-3 =2p^r-3.$$ Therefore, $ p^r = r+2.$  is possible only for ($p=2,r=2$) and ($p=3,r=1$), that is  $G=\mathbb Z_3$ and $G=\mathbb Z_4$ \end{proof}

    \section*{Declarations}
	
	\noindent \textbf{Acknowledgment}: The first author is supported by senior research fellowship of CSIR, India.\\
	
	\noindent\textbf{Conflicts of interest}: There is no conflict of interest regarding the publishing of this paper.
    
%\noindent\textbf{Availability of data and material (data transparency)}: Not applicable.\\

%\noindent \textbf{Code availability}: Not applicable.

 \small \bibliographystyle{unsrt}
  \bibliography{name}
\end{document}